\newtheorem{theorem}{Theorem}[section]
\newtheorem{corollary}[theorem]{Corollary}
\newtheorem{proposition}[theorem]{Proposition}
\newtheorem{sublemma}{Step} [theorem]
\theoremstyle{definition}
\theoremstyle{remark}
\newtheorem{remark}[theorem]{Remark}
\numberwithin{equation}{section}
\newcommand\supp{\operatorname{supp}}
\begin{document}

\title[Loose elements]{Loose elements in binary and ternary matroids}

\author{Jagdeep Singh}
\address{Department of Mathematics and Statistics\\
Mississippi State University\\
Starkville, Mississippi 39762}
\email{singhjagdeep070@gmail.com}

\author{Thomas Zaslavsky}
\address{Department of Mathematics and Statistics\\
Binghamton University\\
Binghamton, New York 13902-6000}
\email{zaslav@math.binghamton.edu}

\subjclass{05B35}
\date{\today}
\keywords{paving matroids, loose elements}

\begin{abstract}
We call a matroid element ``loose'' if it is contained in no circuits of size less than the rank of the matroid. A matroid in which all elements are loose is a paving matroid. 
Acketa determined all binary paving matroids, while Oxley specified all ternary paving matroids. We characterize the binary matroids that contain a loose element. For ternary matroids with a loose element, we show that their size is linear in terms of their rank. 
Moreover, for a prime power $q$, we give a partial characterization of $GF(q)$-representable matroids that have two or more loose elements; we note Rajpal's partial characterization of $GF(q)$-representable paving matroids as a consequence. 
\end{abstract}

\maketitle

\section{Introduction}
\label{intro}
An element $t$ of a matroid $M$ is called \textbf{free} if every circuit of $M$ that contains $t$ is spanning. We say $t$ is \textbf{loose} if every circuit of $M$ containing $t$ has size at least the rank $r(M)$ of the matroid $M$. A matroid $M$ is called \textbf{paving} if every circuit of $M$ has size at least $r(M)$. It is clear that every element of a paving matroid is loose. Oxley and Singh \cite[Lemma 3.1]{oxjag}  characterized binary and ternary matroids that have a free element. In Section 2 we characterize the binary matroids that have a loose element. 

All matroids considered here are finite and simple. The notation and terminology follow \cite{text}, except where otherwise indicated. 

We define some necessary matroids.  
Let $\{b, x_i, y_i\}$, $ 1 \leq i \leq r-1$,  be a collection of $r-1$ three-point lines, and let  $P_r$ denote their parallel connection at the common basepoint $b$. Suppose that $\{b,x_1,\ldots,x_{r-1}\}$ forms a basis of $P_r$. The matroid $M_r$ is the binary extension of $P_r$ obtained by adding an element $z$ to $P_r$ such that $\{z, b,x_1,\ldots,x_{r-1}\}$ is a circuit of $M_r$. Note that $M_r$ is self-dual. By further adding an element on the line spanned by $\{z,b\}$ and taking its dual, we obtain $N_{r+1}$. 

The matroid $L_r$ is derived from the Fano matroid by replacing an element with a series class of size $r-2$. Similarly, the matroid $J_r$ is obtained from the binary affine geometry of rank four by replacing an element with a series class of size $r-3$.  

Representations of the matroids $M_r$ and $N_r$ are shown in Figures \ref{Mr} and \ref{Nr}.

\begin{theorem}
\label{almost_loose_binary_intro}
Let $M$ be a simple binary matroid of rank $r$ that has no coloops. Then $M$ has a loose element $e$ if and only if one of the following holds. 

\begin{enumerate}[label={\rm(\roman*)}]

    \item $M$ is isomorphic to $L_r$ or $J_r$, or
    
    \item $M$ is isomorphic to a restriction of $M_r$ or $N_r$ that contains $e$.
\end{enumerate}
\end{theorem}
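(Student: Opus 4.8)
The plan is to divide into the case where the loose element $e$ is in fact \emph{free} and the case where it is not, and in the latter case to produce a hyperplane in which $e$ \emph{is} free so that the Oxley--Singh theorem can be invoked there. Two preliminary reductions come first. A coloop-free matroid with a loose element must be connected: if $M = M_1 \oplus M_2$ with $e \in M_1$ and $M_2 \neq \emptyset$, then every circuit through $e$ lies in $M_1$ and so has size at most $r(M_1)+1$, forcing $r(M_2)\le 1$ and hence a coloop, while if $e$ lies in no circuit it is itself a coloop. So $M$ is connected and we may assume $r := r(M) \ge 3$, the smaller ranks being vacuous or trivial. And if $e$ is free, then \cite[Lemma~3.1]{oxjag} applied to $M$ gives precisely conclusion~(ii). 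So assume henceforth that $e$ is loose but not free, and fix a circuit $C \ni e$ with $|C| = r$.

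Set $H = \mathrm{cl}_M(C)$, a rank-$(r-1)$ hyperplane of $M$. The point of this choice is that $e$ is free in $M|H$: a circuit of $M|H$ through $e$ is a circuit of $M$ through $e$, hence of size at least $r$, but it sits in a rank-$(r-1)$ matroid, so it has size exactly $r$ and is spanning in $M|H$, and $C$ witnesses that one exists. Applying \cite[Lemma~3.1]{oxjag} to the rank-$(r-1)$ binary matroid $M|H$ shows that $M|H$ is a restriction of $M_{r-1}$ or of $N_{r-1}$ containing $e$. It then remains to control how the elements of the cocircuit $E(M) \setminus H$ are attached, and this is where the hypothesis that $e$ is loose in all of $M$---not merely free in $M|H$---must be exploited.

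The mechanism is binary-geometric: representing $M$ over $GF(2)$, the hyperplane $H$ spans a codimension-one subspace $W$, every point of $M$ off $H$ lies in the single coset complementary to $W$, and a circuit through $e$ uses an even number of off-$H$ points. Tracking this, each off-$H$ point "links" pairs of points inside $H$ by short circuits through $e$, so the requirement that every such circuit have size at least $r$ forces $M|H$ to be small---I expect it to be driven down to a circuit-hyperplane $U_{r-1,r}$ once $r$ is large enough---and simultaneously bounds $|E(M)\setminus H|$, which I expect to be at most $4$. A finite case analysis then finishes: if at most two points sit off $H$, $M$ turns out to be a restriction of $M_r$ or $N_r$, and otherwise the few off-$H$ points together with $C$ are forced to complete a rank-$3$ Fano plane or a rank-$4$ copy of $AG(3,2)$ over a series class, i.e.\ $M \cong L_r$ or $M \cong J_r$; the remaining small ranks (roughly $r \le 4$) are checked directly, as is the converse that $L_r$, $J_r$, and every restriction of $M_r$ or $N_r$ containing the distinguished element really do have a loose element. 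The hard part is this last step---proving that looseness in $M$ forces $M|H$ into its extremal form and bounds the off-$H$ points, and then identifying exactly the finitely many gluings that survive---the sporadic matroids $L_r$ and $J_r$ being the signal that a small enough $H$ leaves just enough room to complete a Fano plane or an $AG(3,2)$.
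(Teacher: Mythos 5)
There is a genuine gap here, and one of the guiding claims in your sketch is actually false. Your reduction up to the point where $C=\mathrm{cl}_M(C)$ is fine: $e$ is indeed free in $M|\mathrm{cl}_M(C)$, that restriction is simple and coloop-free, and the Oxley--Singh lemma (Proposition \ref{freedom}(i)) then says it is a \emph{circuit} --- note that this, not ``a restriction of $M_{r-1}$ or $N_{r-1}$,'' is what their lemma gives; you seem to be conflating it with the theorem you are trying to prove. So $C$ is a circuit-hyperplane. But the next step, where all the real work lives, is only described, not done, and your expectation that $|E(M)\setminus H|\leq 4$ is wrong. Take $M=M_r$ with $e$ the all-ones column, and let $C=\{e,y_2,b_3,\dots,b_r\}$ where $y_2$ has support $\{1,2\}$; then $\mathrm{cl}(C)=C$ is the set of elements whose first two coordinates agree, and $E(M_r)\setminus C=\{b_1,b_2,y_3,\dots,y_r\}$ has $r$ elements. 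So the off-hyperplane part is unbounded in $r$ for matroids that \emph{must} appear in the conclusion, and the ``finite case analysis'' you propose (at most two points off $H$ giving $M_r$/$N_r$, a few more giving $L_r$/$J_r$) cannot be the right shape of the argument. The hard part --- classifying how arbitrarily many off-$H$ elements can attach without creating a short circuit through $e$ --- is exactly what is missing.

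For comparison, the paper avoids the hyperplane decomposition entirely and argues directly on a standard representation $[I_r\mid Q]$: it splits according to whether $e$ lies in a spanning circuit (equivalently, whether some basis makes $e^P$ the all-ones column) rather than according to whether $e$ is free. In the spanning-circuit case every other column of $Q$ is forced to have exactly two ones with pairwise supports meeting in exactly one index, giving $L_r$ or a restriction of $M_r$; in the other case $e^P$ has a single zero in the first row, every other column has a one there plus exactly two more ones, again with pairwise root supports meeting in one index, giving $J_r$ or a restriction of $N_r$. That column-counting argument is short and handles the unboundedly many elements off your hyperplane uniformly; if you want to salvage your approach you would need an analogous uniform structural statement about $E(M)\setminus C$, not a bound on its size.
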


Observe that circuits are among the matroids in Theorem \ref{almost_loose_binary_intro} as they are restrictions of $M_r$. 

Section 3 considers ternary matroids that have a loose element. We show that the size of such matroids is linear in terms of their rank. The main result is the following. 

\begin{theorem}
\label{ternary_one_aloose_intro}
For $r \geq 5$, let $M$ be a simple ternary matroid of rank $r$ a loose element and no coloops. Then $|E(M)| \leq \lfloor\frac{41r-101}{2} \rfloor$ if $r > 10$. If $r \leq 10$, then $|E(M)| \leq \lfloor\frac{35r-35}{2} \rfloor $. 
\end{theorem}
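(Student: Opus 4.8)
The plan is to reduce the problem to a question about the "complement" structure obtained by contracting the loose element and controlling how many elements of a ternary matroid can lie outside any fixed low-rank flat. Let $e$ be a loose element of $M$, so every circuit through $e$ has size at least $r$. Equivalently, for every flat $F$ with $e\notin F$, the element $e$ is not in the closure of $F$ unless $r(F)\ge r-1$; put differently, the only hyperplanes of $M$ avoiding $e$ are... no — rather, every circuit through $e$ having size $\ge r$ means that contracting $e$ destroys no small circuit, and $M\ba e$ has the property that $e\notin\operatorname{cl}_M(X)$ for any independent set $X$ with $|X|<r-1$. The first step is therefore to set $M'=M\ba e$ (rank $r$, since $e$ is not a coloop... actually we must check $e$ could be in a small cocircuit — but looseness is about circuits, so handle $r(M\ba e)$ carefully) and to record that for each cocircuit $C^*$ of $M$ through $e$, the complementary hyperplane has rank $r-1$ and its closure in $M$ together with $e$ still forces $|C^*|$ constraints; the upshot I want is a bound of the form $|E(M)| \le (\text{number of elements in a rank-}(r-1)\text{ or rank-}k\text{ flat}) + (\text{bounded remainder})$.

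Next I would exploit the ternary hypothesis through two classical facts: (a) a simple rank-$k$ ternary matroid has at most $(3^k-1)/2$ elements, so rank-$2$ flats (lines) have at most $4$ points and rank-$3$ flats at most $13$; and (b) the structure of ternary matroids with no long lines is tightly constrained (this is essentially the content behind Oxley's determination of ternary paving matroids, which I may cite). The key combinatorial step is a covering argument: because $e$ is loose, I claim $E(M)\setminus e$ can be covered by a bounded number of flats each of rank at most some small constant (I expect rank $3$ or $4$), together with a set whose size is linear with a small slope. Concretely, pick a cocircuit structure or a modular cut through $e$; the hyperplanes of $M$ missing $e$ are few in a controlled sense, and each contributes only its rank and hence a bounded number of "extra" points beyond a spanning independent set. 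Summing $r-1$ (for a basis-like skeleton) plus $4\cdot(\text{lines count})$ plus the rank-$3$ corrections yields a coefficient of $r$; optimizing the bookkeeping is what produces the constants $41/2$ and, in the small-rank regime $r\le 10$ where a different flat decomposition is more efficient, $35/2$.

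The main obstacle I anticipate is the case analysis needed to get the exact slope and to justify the threshold $r=10$: one must show that a ternary matroid with a loose element cannot simultaneously have many rank-$3$ "clumps" of size $13$ and a long skeleton, which requires ruling out configurations via the excluded minors for ternary representability ($U_{2,5}$, $U_{3,5}$, $F_7$, $F_7^*$) interacting with the looseness constraint. In particular, I expect to need a lemma saying that if two rank-$3$ flats each of size near $13$ share a line, the ternary constraint forces their union to have rank $4$ and size well below $13+13-4$, and that having $e$ loose caps how many such near-maximal rank-$3$ flats can occur (likely at most $\lfloor (r-1)/3\rfloor$ or so). Assembling these local bounds and checking that the two piecewise-linear expressions cross exactly at $r=10$ — including verifying the floor functions match at the boundary — is the delicate part; the extremal examples realizing the bound should come from parallel connections of copies of $F_7^*$-like rank-$3$ pieces along a common line through (a contraction of) $e$, analogous to the binary extremal matroids $L_r$ and $J_r$.
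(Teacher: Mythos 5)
Your proposal has a genuine gap: the central covering claim --- that $E(M)\setminus e$ can be covered by a bounded number of low-rank flats plus a linearly-sized remainder --- is asserted but never derived from looseness, and nothing in the sketch supplies a mechanism that would produce the specific slopes $41/2$ and $35/2$ or the threshold $r=10$. The paper's proof does not go through flats, contractions, or excluded minors at all. It fixes a basis $B$ avoiding $e$, writes the standard ternary representation $P=[I_r\mid Q]$, normalizes so that $e^P$ has one zero entry and all other entries equal to $1$, and then proves one sharp workhorse lemma: if $j^P$ is a $\{1,-1\}$-combination of $k$ columns of $Q-e^P$, then the entries below the top row include at most $k+1$ of the same sign (otherwise $e^P\pm j^P$ has too many zeroes and $e$ lies in a circuit of size less than $r$). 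Every column is then classified by its number of nonzero ``root'' entries (at most $4$, by the lemma with $k=1$), and a case analysis on pairs and triples of columns bounds the number of columns of each type ($8r-34$ for type $4$, $12r-42$ for type $3$, etc.). The two stated bounds come from splitting into the case where a type-$4$ column exists (which forces the type-$3$, type-$2$, and type-$1$ columns to concentrate on a $5$-element support, giving $\lfloor\frac{35r-35}{2}\rfloor$) and the case where none does (giving $\lfloor\frac{41r-101}{2}\rfloor$); the threshold $r=10$ is simply where the maximum of the two expressions switches, not the site of a separate structural argument.

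Concretely, the step in your plan that would fail is the claim that near-maximal rank-$3$ flats of size close to $13$ control the count: looseness of $e$ restricts circuits \emph{through $e$} only, so it does not directly cap the size of flats avoiding $e$, and a rank-$3$ flat of $M$ not containing $e$ can in principle be a full $AG(2,3)$ or larger ternary plane without violating looseness. The constraint that actually bites is on how the representing columns interact with the nearly-all-ones column $e^P$ under $\{1,-1\}$-combinations, which is an inherently coordinate-level statement. Without something equivalent to the sign-counting lemma, the flat-covering route has no way to bound the number of elements whose columns have two, three, or four nonzero root entries, and those are exactly the elements contributing the linear terms in $r$.
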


We may restate the bounds as that $|E(M)| \leq \lfloor \frac{1}{2}\max(41r-101, 35r-35) \rfloor$ for any $r \geq 2$.

A $GF(q)$-matroid is a matroid that is representable over the field of $q$ elements. Rajpal partially characterized $GF(q)$-paving matroids, in which all elements are loose.  His result implies, with some work on low-rank matroids, the characterizations of binary paving matroids by Acketa \cite{acketa} and of ternary paving matroids by Oxley \cite{ternary_pav}. 

\begin{theorem}[Rajpal \cite{rajpal}]
\label{4q_bound_intro}
Let $M$ be a simple $GF(q)$-paving matroid with no coloops that is not a circuit. If $r(M) > q$, then $r(M) \leq 2q$ and $|E(M)| \leq 4q$.
\end{theorem}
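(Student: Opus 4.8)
The plan is to determine the structure of $M$ and then bound $r(M)$ and $|E(M)|$ in that order. Set $r=r(M)$.

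\emph{Structure.} Since $M$ is paving, every $(r-1)$-element set is independent, so if $H$ is a hyperplane with $|H|\ge r$ then every $(r-1)$-subset of $H$ is a basis of $H$ and $M|H\cong U_{r-1,|H|}$. A $GF(q)$-representable uniform matroid $U_{r-1,k}$ with $2\le r-1\le k-2$ requires $k\le q+1$; since $|H|\ge r+1$ would give $r-1\le|H|-2$ together with $|H|\ge r+1\ge q+2$, we conclude $|H|=r$. Thus every dependent hyperplane of $M$ has exactly $r$ elements, and is therefore a circuit-hyperplane (a hyperplane of $r$ elements has nullity $1$, so its unique circuit, of size at least $r$, is all of it). Moreover $M$ is not uniform, since a simple uniform $GF(q)$-matroid of rank $r>q$ with no coloops is $U_{r,n}$ with $n\ge r+1$, which requires $n\le q+1<r+1$. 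Hence some $r$-subset of $E(M)$ is dependent; its closure $C$ is a dependent hyperplane, so $|C|=r$, $\operatorname{cl}(C)=C$, and $C$ is a circuit-hyperplane. Finally $|E(M)\setminus C|\ge 2$: if $E(M)=C\cup\{e\}$ then $C$ is the only circuit of $M$ (using $\operatorname{cl}(C)=C$) and $e$ is a coloop.

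\emph{The bound $r\le 2q$.} Fix a $GF(q)$-representation of $M$. Since $C$ is a circuit of size $r$ and rank $r-1$, after a coordinate change we may take $r-1$ of its elements to be $b_i=e_i$ $(1\le i\le r-1)$ and the last to be $c_0=e_1+\dots+e_{r-1}$; then $\operatorname{cl}(\{b_1,\dots,b_{r-1}\})=C$ (as $C$ is a flat), which is the set of elements of $E(M)$ with $r$th coordinate $0$, so every $x\in E(M)\setminus C$ has $x_r\ne0$ and we rescale it to $x_r=1$. Let $x,x'\in E(M)\setminus C$ be distinct. For $i\ne j$ in $\{1,\dots,r-1\}$, the $(r-1)$-set $\{x,x'\}\cup\{b_k:k\ne i,j\}$ is independent, and a direct computation shows it would be dependent exactly when $x_i=x'_i$ and $x_j=x'_j$; hence $x$ and $x'$ agree in at most one coordinate of $\{1,\dots,r-1\}$. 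Likewise, for distinct $i,j,l$, independence of $\{x,x',c_0\}\cup\{b_k:k\ne i,j,l\}$ forces $(x-x')_i,(x-x')_j,(x-x')_l$ not all equal. So, viewing $x-x'$ as a vector of $GF(q)^{r-1}$, the value $0$ occurs at most once and every nonzero value at most twice; therefore $r-1\le1+2(q-1)$, giving $r\le 2q$. (When $r=3$, necessarily $q=2$, and $r\le 2q$ is immediate.)

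\emph{The bound $|E(M)|\le 4q$.} Choose an $(r-2)$-subset of $C$; in a paving matroid every flat of rank at most $r-2$ is independent, so its closure $F$ is a rank-$(r-2)$ flat with exactly $r-2$ elements, $F\subseteq C$. Then $M/F$ has rank $2$, so its simplification is a $GF(q)$-representable line and has at most $q+1$ points, which correspond exactly to the hyperplanes of $M$ containing $F$. These hyperplanes partition $E(M)\setminus F$, number at most $q+1$, and each such hyperplane $H$ satisfies $|H\setminus F|=|H|-(r-2)\in\{1,2\}$ because $|H|\in\{r-1,r\}$. Hence $|E(M)|=|F|+\sum_{H\supseteq F}|H\setminus F|\le(r-2)+2(q+1)=r+2q\le 4q$.

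\emph{Main obstacle.} The delicate part is the rank bound: everything hinges on choosing the correct families of forced-independent $(r-1)$-element sets so that their independence translates into ``each value of $x-x'$ is attained at most twice.'' Obtaining a circuit-hyperplane $C$ with $\operatorname{cl}(C)=C$ is the prerequisite for this, and the no-coloop hypothesis is used to rule out $U_{r,r}$ and, crucially, to guarantee a second element outside $C$.
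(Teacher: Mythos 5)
Your argument is correct in substance and takes a genuinely different route from the paper. The paper deduces the result from its loose-element machinery: Corollary \ref{paving} (via Theorem \ref{two_almost_loose}) gives $r(M)\le 2q$ for a non-circuit paving matroid, Proposition \ref{sparse_paving} shows $r(M)>q$ forces $M$ to be sparse paving, and then it dualizes, applying the rank bound to the paving matroid $M^*$ to get $|E(M)|=r(M)+r(M^*)\le 4q$. You avoid duality entirely: you first show every dependent hyperplane is a circuit--hyperplane of size exactly $r$ (which in effect re-proves the sparse-paving statement by representability of uniform restrictions rather than by the paper's counting argument), then obtain $r\le 2q$ by a value-repetition count on the difference $x-x'$ of two elements off a circuit--hyperplane $C$ (close in spirit to the paper's count for two loose elements, but phrased through the coordinates adapted to $C$), and finally bound $|E(M)|\le (r-2)+2(q+1)=r+2q\le 4q$ by projecting from a rank-$(r-2)$ flat $F\subseteq C$ and noting that the at most $q+1$ hyperplanes through $F$ partition $E(M)\setminus F$ into blocks of size at most $2$. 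This last step is a clean self-contained replacement for the dual argument and even yields the slightly sharper bound $|E(M)|\le r+2q$.

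One caveat: the representability facts you quote for uniform matroids are stated too broadly. The claim that $U_{r-1,k}$ with $2\le r-1\le k-2$ is $GF(q)$-representable only if $k\le q+1$ is false in general (hyperovals give $GF(q)$-representations of $U_{3,q+2}$ for even $q$), and the claim that a coloop-free uniform matroid $U_{r,n}$ with $n\ge r+1$ needs $n\le q+1$ fails for $n=r+1$, since a circuit is representable over every field. Both uses are easily repaired in your setting: for a hyperplane $H$ with $|H|\ge r+1$, restrict $M|H\cong U_{r-1,|H|}$ to $U_{r-1,r+1}$, whose dual $U_{2,r+1}$ requires $r+1\le q+1$, contradicting $r>q$; and to exclude uniformity you must explicitly invoke the hypothesis that $M$ is not a circuit, so that $n\ge r+2$ and the same corank-two argument applies. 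With these two local fixes your proof is complete.
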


In Section 4 we provide a partial characterization of $GF(q)$-matroids that have as few as two loose elements, from which it is easy to deduce Theorem \ref{4q_bound_intro}. The following is the precise statement. 

\begin{theorem}
\label{two_almost_loose_intro}
Let $M$ be a simple $GF(q)$-matroid with no coloops that has two loose elements $e$ and $f$. Then $r(M) \leq 2q$, or $\{e,f\}$ is a cocircuit of $M$.
\end{theorem}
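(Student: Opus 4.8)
The plan is to analyze the structure of a simple $GF(q)$-matroid $M$ with two loose elements $e$ and $f$, aiming to show that if $\{e,f\}$ is not a cocircuit then $r(M) \leq 2q$. The key tool is the observation that a loose element $t$ in a rank-$r$ matroid imposes a strong restriction on the "short" dependencies through $t$: deleting $t$ cannot create small circuits through its former neighbors in a way that violates looseness, so contracting $t$ should be well-behaved. Concretely, I would first record the standard fact that if $t$ is loose in $M$ and $r(M) = r$, then $M \backslash t$ has rank $r$ (since $t$ lies on no circuit of size $< r$, in particular no circuit of size $2$, but more importantly $t$ is not a coloop because there are no coloops), and that the circuits of $M/t$ of size $< r-1$ are exactly the sets $C$ with $C \cup t$ a circuit of $M$ of size $< r$ — of which there are none. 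Hence $M/t$ is itself a matroid in which the rank has dropped by one and which has no "very small" circuits inherited through $t$.

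**The main reduction.**

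Next I would contract one loose element, say $e$, and study $M/e$, which has rank $r-1$. The crucial point is to understand where $f$ sits in $M/e$: since $f$ is loose in $M$, and since $e \neq f$, I claim $f$ is either still loose (or close to loose) in $M/e$, or $\{e,f\}$ being a cocircuit is forced. The dichotomy should come from analyzing a smallest circuit $C_f$ of $M$ through $f$: we have $|C_f| \geq r$. If $e \notin C_f$, then $C_f$ or a circuit very close to it survives in $M/e$ with size $\geq r$ but rank only $r-1$, making $f$ "loose-like" in $M/e$. If $e \in C_f$ for every smallest circuit through $f$, then every small cocircuit relationship between $e$ and $f$ is constrained — and a careful look at the case where the only obstruction is a single circuit forces $\{e,f\}$ to be a cocircuit (equivalently, $\{e,f\}$ is a series pair after the relevant deletions, which is exactly the excluded case). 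This is the step I expect to be the main obstacle: pinning down exactly when the interaction between $e$ and $f$ forces the cocircuit conclusion, rather than merely producing another loose element in a minor.

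**Bounding the rank via the projective-geometry argument.**

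Once the reduction gives a $GF(q)$-matroid with a loose element in which $\{e,f\}$ is not a cocircuit, I would iterate or directly invoke a counting argument in $PG(r-1, q)$. The idea, following the spirit of Rajpal's Theorem~\ref{4q_bound_intro}, is this: represent $M$ in $PG(r-1,q)$; a loose element $t$ lies in no circuit of size $< r$, which forces every hyperplane through $t$ (there are at most $\frac{q^{r-1}-1}{q-1}$ of them, but that is too weak) — better, it forces that $M \backslash t$ spans and that $t$ together with any $r-2$ other points spanning a hyperplane cannot close up too quickly. More usefully, looseness says: for any subset $X$ of $E(M) \setminus t$ with $|X| \leq r-2$, the point $t$ is not in the closure of $X$ unless $\mathrm{cl}(X)$ already contains a circuit through $t$, which it cannot if $|X \cup t| < r$. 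So $t$ is freely placed relative to all flats of rank $\leq r-2$ not containing it — this is a projective-geometry freeness condition. Two such freely-placed points $e$ and $f$ then force the ambient geometry to be large unless they are tied together by a cocircuit; counting the flats they avoid against the size of $PG(r-1,q)$ yields $r \leq 2q$. The delicate part is getting the constant $2q$ rather than something larger, which I expect requires pairing the two loose elements: each of $e$ and $f$ "uses up" a projective line's worth of freedom, and the $q+1$ points on a line over $GF(q)$ is what produces the factor $2$ in $2q$.

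**Main obstacle.**

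I expect the genuine difficulty to be the casework around the cocircuit exception: showing that the *only* way two loose elements can coexist in a low-rank situation that resists the counting bound is for $\{e,f\}$ to form a cocircuit (a series pair), and handling the boundary cases $r$ small by hand. The projective counting, once correctly set up with the freeness characterization of looseness, should be relatively mechanical.
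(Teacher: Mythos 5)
There is a genuine gap: your plan never identifies the actual mechanism by which \emph{two} loose elements bound the rank, and the two routes you sketch both fail. First, the reduction ``contract $e$ and get a loose(-like) element in $M/e$, then count'' cannot work in principle, because a single loose element imposes no bound on the rank at all: the binary matroids $M_r$ and $N_r$ of Section 2 have a loose element for every rank $r$, and Section 3 only bounds the \emph{size} of a ternary matroid with one loose element linearly in $r$, not the rank. (Also, your ``standard fact'' about circuits of $M/t$ is false as stated: circuits of $M$ avoiding $t$ can produce small circuits of $M/t$, e.g.\ the triangles of $M_r$ not through the loose element.) Second, the projective counting you describe --- ``counting the flats they avoid against the size of $PG(r-1,q)$'' --- is not set up in any way that could produce the constant $2q$; you acknowledge this yourself but defer it as ``mechanical,'' and it is not.

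The missing idea is a direct pigeonhole on coordinates, using both loose elements simultaneously. Since $\{e,f\}$ is not a cocircuit and $M$ has no coloops, there is a basis $B$ disjoint from $\{e,f\}$. Looseness of $e$ forces its column $e^P$ in $[I_r\mid Q]$ to have at most one zero (two zeros at positions $i,j$ would put $e$ in $\operatorname{cl}(B-\{b_i,b_j\})$, a circuit through $e$ of size at most $r-1$); scale so that $e^P$ is all ones apart from possibly the top entry. Now the interaction: if the column $f^P$ had three equal nonzero entries $c$ at positions $i,j,k$ (below the top), then $f^P - c\,e^P$ vanishes there, so $(\{e,f\}\cup B)-\{b_i,b_j,b_k\}$ is a dependent set of size $r-1$ containing a circuit through $e$ or $f$, contradicting looseness. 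Pigeonholing over the $q-1$ nonzero field values, $f^P$ has at most $2(q-1)$ nonzero entries below the top and at most one zero, giving $r-1\le 2q-1$, i.e.\ $r\le 2q$. Your ``freely placed relative to flats of rank $\le r-2$'' reformulation of looseness is correct and could in principle be steered into this argument, but as written your proposal does not contain the step where $e$'s column is used to cap the multiplicities in $f$'s column, and that step is the entire proof.
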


We frequently use a standard representation matrix for a representable matroid $M$ of rank $f$ with respect to a basis $B = \{b_1,b_2,\dots,b_r\}$.  The matrix is $P = [I_r \mid Q]$, where the $i^{\rm th}$ column, $1 \leq i \leq r$, is for the basis element $b_i$ and $f^P$ denotes the column labeled by the element $f$ of $M$.

\begin{figure}
\[
\begin{blockarray}{cccccccc}
 &&&& e &  & &    \\
\begin{block}{[cccc|cccc]}
 \bigstrut[t]&&&& 1 & 1 & 1 & 0  \\
  &&&& 1 & 1 & 0 & 1  \\
 &&&& 1 & 0 & 1 & 1  \\
 &&I_r&& 1 & 0 & 0 & 0  \\
 &&&& \vdots & \vdots & \vdots & \vdots  \\
 &&&& 1 & 0 & 0 & 0 \bigstrut[t ] \\
\end{block}
\end{blockarray}
 \]
 \caption{Representation of $L_r$ over $GF(2)$.}
  \label{Lr}
\end{figure}

\begin{figure}
\[
\begin{blockarray}{cccccccc}
 &&&  & e & &    \\
\begin{block}{[cccc|cccc]}
\bigstrut[t]&&&& 0 & 1 & 1 & 1  \\
 &&&& 1 & 1 & 1 & 0  \\
  &&&& 1 & 1 & 0 & 1  \\
 &&I_r&& 1 & 0 & 1 & 1  \\
 &&&& 1 & 0 & 0 & 0  \\
 &&&& \vdots & \vdots & \vdots & \vdots  \\
 &&&& 1 & 0 & 0 & 0\bigstrut[t ]  \\
\end{block}
\end{blockarray}
 \]
 \caption{Representation of $J_r$ over $GF(2)$.}
  \label{Jr}
\end{figure}

\begin{figure}
\[
\begin{blockarray}{cccccc}
 &&& e &  &    \\
\begin{block}{[ccc|ccc]}
\bigstrut[t ]&&& 1 & \ldots & 1 \\
 &I_r& & \vdots & I_{r-1} &  \\
 &&& 1 &  & \bigstrut[t ] \\
\end{block}
\end{blockarray}
 \]
 \caption{Representation of $M_r$ over $GF(2)$.}
  \label{Mr}
\end{figure}

\begin{figure}
\[
\begin{blockarray}{ccccccc}
&& & e &  &  &    \\
\begin{block}{[ccc|cccc]}
\bigstrut[t]&&& 0 & 1 & \ldots & 1 \\
 &&& 1 & 1 & \ldots & 1 \\
 &I_r&& \vdots & & I_{r-2}  &  \\
 &&& \vdots & & & \\
 &&& 1 & & &\bigstrut[b ]\\
\end{block}
\end{blockarray}
 \]

\caption{Representation of $N_r$ over $GF(2)$.}
\label{Nr}
\end{figure}

\section{loose elements in binary matroids}

Oxley and Singh characterized binary and ternary matroids with no coloops that have a free element. 

\begin{proposition}[\cite{oxjag}]
\label{freedom} 
Let $M$ be a simple $GF(q)$-matroid with no coloops and rank at least two, and let $e$ be a free element of $M$. Then 
\begin{itemize}
\item[(i)] $M$ is a circuit when $q =2$; and
\item[(ii)] when $q = 3$, either $M \cong U_{2,4}$ or $M$ can be obtained from a circuit $C$ containing $e$ by, for some subset $D$ of $C- e$, $2$-summing a copy of $U_{2,4}$ across each element of $D$.
\end{itemize}
\end{proposition}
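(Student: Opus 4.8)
The plan is to treat the two fields separately, in each case via the fundamental circuit of $e$ with respect to a well-chosen basis.

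\emph{Part (i).} Since $e$ is not a coloop it lies in a circuit, and freeness makes that circuit spanning, so there is a basis $B$ with $C_{0}:=B\cup e$ a circuit of $M$. Suppose $E(M)\neq C_{0}$ and pick $g\in E(M)\setminus C_{0}$; by simplicity the fundamental circuit $C_{g}:=C(g,B)$ has $|C_{g}|\geq 3$. Since $M$ is binary, $C_{0}\,\triangle\, C_{g}$ is a disjoint union of circuits; it contains $e$, hence contains a circuit through $e$, which by freeness is spanning and so has $r+1$ elements. But $C_{0}\cap C_{g}=C_{g}\setminus g$, so $|C_{0}\,\triangle\, C_{g}|=(r+1)+|C_{g}|-2(|C_{g}|-1)=r+3-|C_{g}|\leq r$, contradicting the presence inside it of a circuit of size $r+1$. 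Hence $E(M)=C_{0}$ and $M\cong U_{r,r+1}$.

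\emph{Part (ii).} I would induct on $|E(M)|$. If $r:=r(M)=2$ then $M\cong U_{2,n}$, and ternarity forces $n\leq 4$, giving the circuit $U_{2,3}$ or $U_{2,4}$. So assume $r\geq 3$; note $M$ is connected, since $e$ lies in a spanning circuit. The two structural facts to prove are: \textbf{(A)} every non-spanning circuit of $M$ is a triangle; and \textbf{(B)} every line of $M$ has at most three points, that is, $M$ has no $U_{2,4}$-restriction. Both rest on the same device. For \textbf{(A)}, let $C'$ be a non-spanning circuit (so $e\notin C'$), choose $g\in C'$, extend $C'\setminus g$ to a basis $B'$, and represent $M$ over $GF(3)$ with basis $B'$. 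Then $C(g,B')=C'$, while freeness gives $C(e,B')=B'\cup e$, so the column $\mathbf v_{e}$ may be scaled to all ones and $\mathbf v_{g}$ is supported exactly on the positions of $C'\setminus g$, with entries $s_{f}\in\{1,2\}$. For each $f_{0}\in C'\setminus g$ the combination $-s_{f_{0}}\mathbf v_{e}+\mathbf v_{g}$ annihilates the $f_{0}$-coordinate and exhibits a circuit $E_{f_{0}}$ consisting of $e$, $g$, every basis element off $C'$, and those $f\in C'\setminus g$ with $s_{f}\neq s_{f_{0}}$; freeness forces $E_{f_{0}}$ to be spanning, so $|E_{f_{0}}|=r+1$, which forces $s_{f_{0}}$ to occur in $\mathbf v_{g}$ only at $f_{0}$. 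Letting $f_{0}$ range over $C'\setminus g$ makes these entries pairwise distinct, so $|C'\setminus g|\leq 2$ and $|C'|=3$. Fact \textbf{(B)} follows from an analogous computation: coordinatizing a $U_{2,4}$-restriction $\{x,y,z,w\}$ as $z=x+y$, $w=x-y$, the column of $e$ has the form $ax+by+\mathbf u$ with $a,b$ nonzero and $\mathbf u\neq 0$ supported off $\{x,y\}$; since $b\in\{a,-a\}$ in $GF(3)$, one of $\mathbf v_{e}-a\mathbf v_{z}$, $\mathbf v_{e}-a\mathbf v_{w}$ equals $\mathbf u$, and the resulting relation is a non-spanning circuit through $e$, a contradiction. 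I expect \textbf{(A)} to be the main obstacle: it is the one genuinely computational point, and one must check with care that the sets produced really are circuits, not merely dependent.

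Granting \textbf{(A)} and \textbf{(B)}, distinct non-spanning triangles are pairwise disjoint: two sharing two points would force a line on at least four points, against \textbf{(B)}; two sharing one point yield, by circuit elimination on their other four points, a circuit which for $r\geq 4$ is by \textbf{(A)} a triangle meeting one of them in two points, again against \textbf{(B)}, while $r=3$ and the few other small configurations are handled directly (the obstruction being that a free element would then have too few admissible $GF(3)$-columns). Finally I would reconstruct $M$: writing the non-spanning triangles as $T_{i}=\{x_{i},y_{i},z_{i}\}$, $1\leq i\leq m$, the minor $M_{0}:=M/\{x_{1},\dots,x_{m}\}\ba\{y_{1},\dots,y_{m}\}$ is simple and ternary of rank $r-m$, has no coloops, still has $e$ as a free element, and has no non-spanning circuit, so by induction $M_{0}$ is a circuit or $U_{2,4}$. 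If $M_{0}\cong U_{2,4}$ then $m=0$ (a $2$-sum onto the other copy of $U_{2,4}$ would leave a non-spanning triangle through $e$) and $M\cong U_{2,4}$; otherwise $M_{0}$ is a circuit $C$ containing $e$, and undoing the contractions and deletions exhibits $M$ as $C$ with a copy of $U_{2,4}$ $2$-summed across each $z_{i}$. Since $z_{i}\in T_{i}$ and $e$ lies in no non-spanning circuit, each $z_{i}\in C\setminus e$, as required.
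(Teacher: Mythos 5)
This proposition is quoted in the paper from Oxley--Singh \cite{oxjag} without proof, so your argument has to stand on its own. Part (i) does: the symmetric-difference count $|C_{0}\,\triangle\,C_{g}|=r+3-|C_{g}|\leq r$ against a spanning circuit through $e$ inside it is correct and complete. In part (ii), your facts \textbf{(A)} and \textbf{(B)} are also essentially sound: in each case the exhibited linear combination gives a \emph{dependent} set whose deletion of $e$ is independent (because the other columns involved have a nonzero coordinate outside the set), so it contains a circuit through $e$, and the size count then forces the conclusion; the worry you flag about ``circuits versus dependent sets'' is real but resolves exactly this way.

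The genuine gap is the endgame. First, the disjointness of the non-spanning triangles is only proved for $r\geq 4$; the $r=3$ case (and the ``few other small configurations'') is asserted, not argued --- it is true, but it needs an actual argument (e.g.\ a counting argument showing a free point cannot avoid all lines spanned by pairs of the five or six marked points in $PG(2,3)$), and ``too few admissible columns'' is only a pointer to one. Second, and more seriously, the inductive reconstruction is unjustified at its key step: you assert that $M_{0}=M/\{x_{1},\dots,x_{m}\}\backslash\{y_{1},\dots,y_{m}\}$ is simple, ternary of rank $r-m\geq 2$, coloop-free, has $e$ free and has no non-spanning circuit (each of these needs a check, e.g.\ that $\{x_{1},\dots,x_{m}\}$ is independent), and then that ``undoing the contractions and deletions exhibits $M$'' as the stated 2-sum. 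Knowing the minor $M_{0}$ does not determine $M$; to recover the 2-sum structure you must show that each triangle $T_{i}$ is exactly 2-separated from the rest, i.e.\ $r(E(M)-T_{i})=r-1$, so that $M$ is the 2-sum of $U_{2,4}$ with the complementary part along the line of $T_{i}$ --- and that separation statement is precisely the structural content of (ii), nowhere derived from \textbf{(A)}, \textbf{(B)} and freeness in your sketch. A cleaner finish in the spirit of the paper's Sections 2--3 avoids the induction entirely: fix a basis $B$ avoiding $e$, scale $e^{P}$ to all ones, use $e^{P}\pm f^{P}$ and freeness to show every column outside $B\cup e$ has exactly two nonzero entries (for $r\geq 3$), and then use \textbf{(A)}/\textbf{(B)}-type arguments to show these supports are pairwise disjoint; this displays $M$ directly as a circuit with copies of $U_{2,4}$ 2-summed across elements of $C-e$.
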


Now we prove Theorem \ref{almost_loose_binary_intro} that characterizes simple binary matroids without coloops that have a loose element.

\begin{proof}[Proof of Theorem \ref{almost_loose_binary_intro}]
It is clear that both  $L_r$ and $J_r$ have a loose element. If $M$ is a restriction of $M_r$ or $N_r$ with no coloops that contains the element $e$ as represented in Figures \ref{Mr} and \ref{Nr}, then $e$ is a loose element of $M$. 

Conversely, let $M$ be a simple binary matroid of rank $r$ with no coloops that contains a loose element $e$. First suppose that there is a basis $B = \{b_1, \ldots, b_r\}$ of $M$ such that the column $e^P$ in the standard binary representation $P=[I_r \mid Q]$ has all ones. Let $f^P$ be a column in $Q - e^P$ labeled by an element $f$ of $E(M)$. Since $M$ is simple, $f^P$ has at least two ones. Suppose $f^P$ has at least three ones, say at entries $i,j,$ and $k$. Then $(\{e,f\} \cup B) - \{b_i, b_j, b_k\}$ contains a circuit of size at most $r-1$ that includes $e$. Since $e$ is loose, this is a contradiction. Therefore $f^P$ has exactly two ones. Let $f^P, g^P$ be a pair of columns in $Q - e^P$. We show that, if $\{i,j\}$ and $\{k,l\}$ are the non-zero entries of $f^P$ and $g^P$ respectively, then $|\{i,j\} \cap \{k,l\}| = 1$. If not, then $f^P + g^P$ has four ones so $e^P + f^P + g^P$ has four zeroes and $e$ is contained in a circuit of size less than $r$, a contradiction. Therefore $M$ is isomorphic to $L_r$ or a restriction of $M_r$.

We now assume that there is no spanning circuit of $M$ containing $e$, so all circuits of $M$ containing $e$ have size $r$. Let $B$ be basis of $M$ that does not contain $e$. The column $e^P$ in the matrix $P$ has exactly one entry that is zero; we may assume it is the first entry of the column. Let $f^P$ be a column in $Q-e^P$ labeled by an element $f$ of $M$. We show that the first entry of $f^P$ is one and it has exactly three non-zero entries. If the first entry of $f^P$ is zero, then for some $i,j > 1$, the $i^{\rm th}$ and the $j^{\rm th}$ entries of $f^P$ are non-zero. Observe that $(\{e,f\} \cup B) - \{b_1, b_i, b_j\}$ contains a circuit of size at most $r-1$ that includes $e$, a contradiction. Therefore the first entry of $f^P$ is one. It is easy to check that if $f^P$ has exactly two non-zero entries then we get a spanning circuit containing $e$, a contradiction. Also, if $f^P$ has more than three non-zero entries, we get a circuit of size less than $r$ that contains $e$. Therefore $f^P$ has exactly three non-zero entries. Let $f^P, g^P$ be a pair of columns in $Q-e^P$. Note that for the non-zero entries at $\{1,i,j\}$ of $f^P$ and at $\{1,k,l\}$ of $g^P$, we have $|\{i,j\} \cap \{k,l\}| = 1$. If not, then $e^P + f^P + g^P$ has five zeroes so $e$ is in a circuit of size less than $r$, a contradiction. It follows that $M$ is isomorphic to $J_r$ or a restriction of $N_r$. 
\end{proof}

\section{loose elements in ternary matroids}

We show that the size of a ternary matroid that has a loose element is linear in terms of its rank. The main result is as follows.

\begin{theorem}
\label{ternary_one_aloose}
For $r \geq 5$, let $M$ be a simple ternary matroid of rank $r$ containing a loose element and with no coloops. Then $|E(M)| \leq \lfloor\frac{41r-101}{2} \rfloor$ if $r > 10$. If $r \leq 10$, then $|E(M)| \leq \lfloor\frac{35r-35}{2} \rfloor$.
\end{theorem}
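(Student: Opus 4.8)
The plan is to bound $|E(M)|$ by controlling, for each basis $B$ avoiding the loose element $e$, the structure of the standard ternary representation $P = [I_r \mid Q]$, much as in the binary case but with the extra flexibility that entries may be $1$ or $-1$. Since $e$ is loose, every circuit through $e$ has size at least $r$; because $M$ has no coloops, $e$ lies in some cocircuit, and we may choose a basis $B$ with $e \notin B$, so $e^P$ is a column of $Q$. The number of zero entries in $e^P$ is then $r - (\text{size of the smallest circuit through }e\text{ meeting }B)$, which is at most $1$ since that circuit has size $\geq r$; so $e^P$ has at most one zero entry, and by the same argument comparing $e^P$ to another column $f^P$ of $Q$, any two columns of $Q$ cannot have too many common zero positions — otherwise some sum $e^P \pm f^P$ (or $e^P \pm f^P \pm g^P$) would have too many zeros and yield a short circuit through $e$. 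This gives a combinatorial constraint on the supports (and the $\pm$ patterns) of the columns of $Q$.

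First I would reduce to the case that $e^P$ is either all-ones (the ``spanning circuit'' case, where $M$ has a spanning circuit through $e$) or has exactly one zero entry, say in position $1$. In each case I would classify the possible columns $f^P \in Q - e^P$: in the all-ones case, $\supp(f^P)$ must be small (else $e^P - f^P$, suitably scaled, has many zeros), and one shows $|\supp(f^P)| \leq 2$ or $3$ depending on the sign pattern; in the one-zero case, $f^P$ is forced to be nonzero in position $1$ and to have bounded support. Then I would bound how many distinct columns can occur: two columns $f^P, g^P$ may share at most one common support position outside a controlled set, and over $GF(3)$ there are only two choices of nonzero entry, so the count of admissible columns on a given support pattern is bounded. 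Summing over the allowed support patterns (which, because supports have size at most $3$ and pairwise intersections are controlled, form something like a partial linear space or a union of near-pencils on $r-1$ or $r$ points) yields a linear bound in $r$. The constants $41/2$, $101/2$, $35/2$, $35/2$ and the threshold $r = 10$ will come out of optimizing these sub-counts, with the low-rank regime $r \leq 10$ behaving differently because small configurations (Fano- or $AG(3,2)$-like pieces, analogues of $L_r$, $J_r$, and the ternary analogues using $U_{2,4}$-substitutions from Proposition \ref{freedom}) can be packed more efficiently relative to $r$.

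The analysis splits further according to whether $e$ is actually free (then Proposition \ref{freedom}(ii) applies directly and $M$ is built from a circuit by $2$-summing copies of $U_{2,4}$, which is easily seen to have size linear in $r$ with a small constant) or is loose but not free, so there is a circuit through $e$ of size exactly $r$ but none of smaller size. In the not-free case I would argue that deleting $e$ (or contracting along the cocircuit containing $e$) produces a ternary matroid whose short-circuit structure is tightly constrained, and then induct or directly count. A useful intermediate step is to show that the elements of $E(M) - B - e$ fall into a bounded number of ``rank-$2$ or rank-$3$ flats'' determined by their support patterns, each contributing $O(1)$ elements, and that the number of such flats is at most roughly $\frac{39}{2}$ or so times $r$ plus a constant; combining with $|B| = r$ gives the stated bounds.

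The hard part will be the tight constant in the counting step: getting from a crude linear bound to the exact $\lfloor (41r-101)/2 \rfloor$ requires carefully identifying the extremal configurations and checking that no support pattern is double-counted and that the sign freedom over $GF(3)$ does not create extra columns that a naive argument misses. In particular, I expect the main obstacle to be handling columns of support size $3$: over $GF(3)$ a triangle of support positions can carry several distinct columns (different $\pm$ patterns, corresponding to the two lines $U_{2,3}$ and $U_{2,4}$ pieces one can build), and bounding the total contribution of all such triangles — while respecting the pairwise-intersection constraint forced by looseness of $e$ — is where the delicate optimization, and the case split at $r = 10$, really lives. I would isolate this as a separate lemma about families of $\leq 3$-element subsets of an $(r{-}1)$- or $r$-element ground set with bounded pairwise intersections, prove the extremal bound there, and then feed it back into the representation-matrix argument.
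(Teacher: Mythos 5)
Your overall strategy is the right one and is the one the paper uses: pick a basis $B$ avoiding $e$, observe that $e^P$ has at most one zero because $e$ is loose, and then constrain the remaining columns of $Q$ by noting that $e^P \pm f^P$ (or $e^P \pm f^P \pm g^P$, etc.) having too many zeros would put $e$ in a circuit of size less than $r$. However, as written the proposal has two genuine gaps. First, your classification of the admissible columns is too restrictive: you assert that $|\supp(f^P)| \leq 2$ or $3$, but over $GF(3)$ the correct constraint is only that $f^P$ has at most two entries equal to $1$ and at most two equal to $-1$ among the coordinates where $e^P$ is nonzero (each sign class of size $k$ kills $k$ coordinates in one of $e^P - f^P$ or $e^P + f^P$, and you can afford two). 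So columns with four nonzero ``root'' entries (support size five, counting the coordinate where $e^P$ vanishes) do occur, and in the paper's count they are the dominant class, contributing the $8r-34$ term. An argument built on support size at most $3$ would either miss these columns entirely or fail when it meets them.

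Second, and more fundamentally, the proposal defers the entire quantitative content of the theorem to an unstated ``delicate optimization'' lemma. The specific bounds $\lfloor(41r-101)/2\rfloor$ and $\lfloor(35r-35)/2\rfloor$, and the threshold at $r=10$, do not come from a clean extremal set-system lemma about small subsets with bounded pairwise intersections; they come from a case analysis on whether a column with four nonzero root entries exists. In each case one must bound not only the columns of that maximal type but also how its presence restricts the columns of every smaller type (e.g., a type-$4$ column forces every type-$2$ and type-$1$ column to have support inside its own five coordinates, reducing those counts to constants), and then take the maximum of the two resulting linear functions of $r$. Without carrying out these interacting counts --- which is where essentially all the work of the proof lies --- the stated constants are not established, so the proposal as it stands is a correct plan of attack rather than a proof.
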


\begin{proof}
Let $e$ be a loose element of $M$. If $e$ is free, then the result follows from Proposition \ref{freedom}, so we assume $e$ is not free. Then there is a basis $B = \{e_1, \ldots, e_r\}$ of $M$ such that the column $e^P$ in the standard ternary representation $P = [I_r \mid Q]$ has top entry zero and all the other entries non-zero. By row and column scaling, we may assume that the non-zero entries of $e^P$ are ones. We call the indices of the non-zero entries of a column $f^P$ in $P$ the \textit{support}, $\supp(f^P)$, of $f^P$. Let $Q-e^P$ denote the columns in $Q$ excluding $e^P$. 

\begin{sublemma}
\label{argument}
For $k \geq 1$, let $j^P$ be a vector that is a linear combination of $k$ columns in $Q - e^P$ with coefficients in $\{1, -1\}$, and let $j^{P'}$ be the vector obtained from $j^P$ by removing the top entry. Then $j^{P'}$ has at most $k+1$ entries of the same sign. Moreover, if the top entry of $j^P$ is zero, then the number of entries of the same sign in $j^P$ is at most $k$. 
\end{sublemma}

To prove this, observe that if $j^{P'}$ has more than $k+1$ entries of the same sign, then $e^P + j^P$ or $e^P - j^P$ has at least $k+2$ zeroes. It follows that $e^P + j^P$ or $e^P - j^P$ can be combined with at most $r - (k+2)$ columns in $I_r$ to obtain the zero vector.  Similarly, if the top entry of $j^P$ is zero and $j^P$ has more than $k$ entries of the same sign, then $e^P + j^P$ or $e^P - j^P$ has at least $k+2$ zeroes.  Either way, $e$ is in a circuit of size less than $r$, a contradiction. Therefore \ref{argument} holds.

\begin{sublemma} \label{sublemma_1}
The number of columns in $Q-e^P$ that have top entry zero is at most $\lfloor\frac{r-1}{2} \rfloor$.
\end{sublemma}

For the proof, let $f^P$ be such a column. By \ref{argument}, $f^P$ has exactly two non-zero entries: one $1$ and one $-1$. We show that any two columns $f^P_1, f^P_2$ in $Q-e^P$ with the top entry zero have disjoint supports. It is clear that $f_1^P$ and  $f_2^P$ do not have equal support; otherwise, $f_1$ and $f_2$ are parallel elements in $M$, but $M$ is simple. Suppose the support of $f_1^P$ is $\{i,j\}$ and that of $f_2^P$ is $\{j,k\}$. Then, either $f_1^P+f_2^P$ or $f_1^P-f_2^P$ has three entries of the same sign and has the top entry zero, in contradiction to \ref{argument}. Therefore \ref{sublemma_1} holds. 

\bigskip
Next, consider the columns in $Q-e^P$ that have a non-zero top entry. By row and column scaling, we may assume that the top entry of all such columns is $1$. In the rest of the proof, by \textit{root entries} of such a column $f^P$, we mean the entries of $f^P$ excluding the top entry. By \ref{argument}, it follows that $f^P$ has at most two root entries of the same sign so  $f^P$ has at most four root entries that are non-zero. We call a column $f^P$ with $h$ non-zero root entries a \textit{column of type h}.

\begin{sublemma} \label{sublemma_2}
The number of columns of type $4$ in $Q-e^P$ is at most $8r - 34$.    
\end{sublemma}

Let $f^P$ be a column of type $4$ in $Q-e^P$ with support $\{1,i,j,k,l\}$. We may assume that the $i^{\rm th}$ and the $j^{\rm th}$ entries are $1$ and the $k^{\rm th}$ and the $l^{\rm th}$ entries are $-1$. For any other column $g^P$ of type $4$, we show that the support of $g^P$ intersects both $\{i,j\}$ and $\{k,l\}$. Suppose that the $i^{\rm th}$ and the $j^{\rm th}$ entries of $g^P$ are zero. If $g^P$ has a $-1$ at a position other than $k$ and $l$, then $f^P-g^P$ has three ones and has top entry zero, a contradiction to \ref{argument}. Therefore both $-1$'s of $g^P$ are at positions $k$ and $l$. It now follows that $f^P+g^P$ has four ones, again a contradiction to \ref{argument}. Thus the support of $g^P$ intersects $\{i,j\}$ and by symmetry, $\{k,l\}$ as well. 
Note that the number of columns of type $4$ with the same support as $f^P$ is at most six. 

Suppose $g^P$ is a column of type $4$ whose support intersects $\{i,j,k,l\}$ in three indices.  We may assume that $\supp(g^P) = \{1,i,j,k,m\}$ where $m$ is distinct from $l$. We show that there is only one column with that support. If the $m^{\rm th}$ entry of $g^P$ is $1$, then the $i^{\rm th}$ or the $j^{\rm th}$ entry of $g^P$ is $-1$. It follows that $f^P - g^P$ has top entry zero and has at least three entries that are $-1$. This contradicts \ref{argument}, so the $m^{\rm th}$ entry of $g^P$ is $-1$. If the $k^{\rm th}$ entry of $g^P$ is $-1$, then $f^P + g^P$ has four entries that are $-1$, contradicting \ref{argument}. Therefore the $k^{\rm th}$ entry of $g^P$ is $1$. Let $h^P$ be another column in $Q-e^P$, distinct from $g^P$ but with the same support as $g^P$. The $m^{\rm th}$ entry and the $k^{\rm th}$ entry of $h^P$ are $-1$ and $1$ respectively. Since $h^P$ is distinct from $g^P$, the $i^{\rm th}$ entry of $h^P$ is the negative of the $i^{\rm th}$ entry of $g^P$, and the $j^{\rm th}$ entry of $h^P$ is the negative of the $j^{\rm th}$ entry of $g^P$. Then $f^P + g^P + h^P$ has top entry zero and four entries that are $1$, contradicting \ref{argument}, so no such column $h^P$ can exist. Since there are $r-5$ choices for $m$, the number of columns $g^P$ such that $\supp(g^P) \cap \{i,j,k,l\}$ equals $\{i,j,k\}$ is at most $r-5$. By symmetry this holds for each of the four possible intersections, $\{i,j,k\}, \{i,j,l\}, \{i,l,k\},$ and $\{j,k,l\}$. Therefore, the number of columns $g^P$ such that the intersection of $\supp(g^P)$ with $\{i,j,k,l\}$ is three is at most $4(r-5)$. 

Next, we consider a column $g^P$ of type $4$ in $Q-e^P$ such that the size of $\supp(g^P) \cap \{i,j,k,l\}$ is two. The non-zero entries of $g^P$ in $\{i,j,k,l\}$ have the same sign as in $f^P$, for if not, then $f^P - g^P$ has at least five non-zero entries and has the top entry zero so $f^P-g^P$ has three entries of the same sign, in contradiction to \ref{argument}. The possible intersections of the support of $g^P$ with $\{i,j,k,l\}$ are $\{i,k\}, \{i,l\}, \{j,k\},$ and $\{j,l\}$. 
Let $g_1^P, \ldots, g_t^P$ be the columns of type $4$ in $Q-e^P$ whose support intersects $\{i,j,k,l\}$ at $\{i,k\}$. 
We show that the intersection of the support of any two of $g_1^P, \ldots, g_t^P$ that do not have the same support is $\{i,k\}$. Suppose the supports of $g_1^P$ and $g_2^P$ intersect in $\{i,k,m\}$. If the $m^{\rm th}$ entries of $g_1^P$ and $g_2^P$ are of opposite sign, then $g_1^P- g_2^P$ has three entries of same sign and the top entry is zero, a contradiction to \ref{argument}. Therefore the $m^{\rm th}$ entries of $g_1^P$ and $g_2^P$ are the same. It now follows that $g_1^P + g_2^P$ has four entries of same sign, again contradicting \ref{argument}. Since for a support $s$, there are at most two columns in $\{g_1^P, \ldots, g_t^P\}$ that have the support $s$, it follows that $t$ is at most $2\lfloor\frac{r-5}{2} \rfloor \leq r-5$. By symmetry this holds for each possible intersection $\{i,k\}, \{i,l\}, \{j,k\},$, $\{j,l\}$, so the number of columns $g^P$ of type $4$ in $Q-e^P$ such that $|\supp(g^P) \cap \{\ i,j,k,l\}| = 2$ is at most $4(r-5)$. 

Therefore the number of type $4$ columns is at most $4(r-5) + 4(r-5) + 6 = 8r - 34$.

\begin{sublemma} \label{sublemma_3}
The number of columns of type $3$ in $Q-e^P$ is at most $12r-42$. 
\end{sublemma}

Let $f^P$ be a column of type $3$ in $Q-e^P$ with support $\{1,i,j,k\}$. For $\{\alpha, \beta \} = \{1, -1\}$, we may assume that the $i^{\rm th}$ and the $j^{\rm th}$ entries of $f^P$ are $\alpha$, and the $k^{\rm th}$ entry is $\beta$. 
The support of a column $g^P$ of type $3$ in $Q-e^P$ intersects $\{i,j,k\}$; otherwise, $f^P - g^P$ has at least three entries of the same sign and has the top entry zero, a contradiction to \ref{argument}. 
Also, the number of columns of type $3$ in $Q-e^P$ with the same support as $f^P$ is at most six. 

Consider a column $g^P$ of type $3$ in $Q-e^P$ whose intersection with $\{i,j,k\}$ is two.  The possible intersections are $\{i,j\}, \{j,k\},$ and $\{i,k\}$. 

First, let $g^P$ be a type $3$ column such that the support of $g^P$ is $\{1,i,j,m\}$ where $m$ is distinct from $k$. If the $m^{\rm th}$ entry of $g^P$ is $\alpha$, then the $i^{\rm th}$ or the $j^{\rm th}$ entry of $g^P$ is $\beta$. It follows that $f^P-g^P$ has top entry zero and has three entries that are $\beta$, a contradiction to \ref{argument}. Therefore the $m^{\rm th}$ entry of $g^P$ is $\beta$. Note that at least one of the $i^{\rm th}$ and the $j^{\rm th}$ entries of $g^{\rm th}$ is $\beta$; otherwise, $f^P + g^P$ has four entries that are $\beta$, a contradiction to \ref{argument}. It follows that the $i^{\rm th}$ and $j^{\rm th}$ entries of $g^P$ are of opposite signs. Therefore there are at most two columns of type $3$ that have the support $\{1,i,j,m\}$. Since there are $r-4$ choices for $m$, the number of columns $g^P$ of type $3$ in $Q-e^P$ such that $\supp(g^P) \cap \{i,j,k\}$ equals $\{i,j\}$  is at most $2(r-4)$. 

Second, suppose that $g^P$ is a type $3$ column in $Q-e^P$ such that the support of $g^P$ is $\{1,j,k,m\}$ where $m$ is distinct from $i$. There are six possible columns of type $3$ that have the support $\{1,j,k,m\}$. However, if the $i^{\rm th}$, $k^{\rm th}$, and $m^{\rm th}$ entries of $g^P$ are $\alpha$, $\alpha,$ and $\beta$ respectively, then $f^P-g^P$ has top entry zero and has three entries that are $\alpha$, a contradiction to \ref{argument}. Similarly, if the $i^{\rm th}$, $k^{\rm th}$, and $m^{\rm th}$ entries of $g^P$ are $\beta$, $\alpha,$ and $\beta$ respectively, then $f^P-g^P$ has top entry zero and has three entries that are $\alpha$, another contradiction to \ref{argument}. Therefore, at most four columns of type $3$ in $Q-e^P$ have support $\{1,j,k,m\}$. Since there are $r-4$ choices for $m$, the number of columns $g^P$ in $Q-e^P$ such that $\supp(g^P) \cap \{i,j,k\}$ equals $\{j,k\}$  is at most $4(r-4)$. By symmetry, the number of columns of type $3$ in $Q-e^P$ whose intersection with $\{i,j,k\}$ is $\{i,k\}$ is $4(r-4)$ as well. It follows that the number of columns of type $3$ in $Q-e^P$ whose intersection with $\{i,j,k\}$ has size two is $10(r-4)$. 

Now consider a column $g^P$ such that the intersection of $\supp(g^P)$ with $\{i,j,k\}$ has size one. 

Suppose $\supp(g^P) \cap \{i,j,k\} = \{k\}$. Let $\supp(g^P) = \{1,k,m,n\}$. The $k^{\rm th}$ entry of $g^P$ must be $\beta$, or else $f^P-g^P$ has top entry zero and has three entries that are $\alpha$, contrary to \ref{argument}. Moreover, the $m^{\rm th}$ and the $n^{\rm th}$ entries of $g^P$ must be $\alpha$, or else $f^P-g^P$ has top entry zero and at least three entries that are $\alpha$, also contrary to \ref{argument}. It now follows that $f^P + g^P$ has five entries that are $\alpha$, again contradicting \ref{argument}. Therefore no such column exists in $Q-e^P$.  

Let $g_1^P, \ldots, g_t^P$ be all the columns of type $3$ in $Q-e^P$ whose support intersects $\{i,j,k\}$ at $\{i\}$. For $g^P$ in $\{g_1^P, \ldots, g_t^P\}$, the $i^{\rm th}$ entry of $g^P$ is $\alpha$, because if not, then $f^P - g^P$ has five non-zeroes and has top entry zero; it follows that $f^P - g^P$ has three entries of the same sign while the top entry is zero, in contradiction to \ref{argument}. Note that $g^P$ has two non-zero entries that are $\alpha$; otherwise, $f^P - g^P$ has three entries that are $\beta$ while the top entry is zero, which is impossible. It follows that for any fixed support $s$, there are at most two columns in $\{g_1^P, \ldots, g_t^P\}$ that have the support $s$. 
Now, we show that the intersection of the supports of any pair of columns in $\{g_1^P, \ldots, g_t^P\}$ that have distinct support is $\{i\}$. Suppose not, and let $\supp(g_1^P) \cap \supp(g_2^P) = \{i,m\}$. If the $m^{\rm th}$ entries of $g_1^P$ and $g_2^P$ are of opposite signs, then $g_1^P - g_2^P$ has three entries of the same sign while the top entry is zero, a contradiction. On the other hand, if the $m^{\rm th}$ entries of $g_1^P$ and $g_2^P$ are both $\alpha$, then $g_1^P + g_2^P$ has four entries that are $\beta$, in contradiction to \ref{argument}. Therefore the $m^{\rm th}$ entries of $g_1^P$ and $g_2^P$ are both $\beta$. It follows that $g_1^P + g_2^P + f^P$ has four entries that are $\alpha$ and has top entry zero, contradicting \ref{argument}. 
We conclude that there are only $\lfloor\frac{r-4}{2} \rfloor$ possibilities for $\{m,n\}$, so that $t$ is at most  $2 \lfloor\frac{r-4}{2} \rfloor \leq r-4$. By symmetry this holds when $\supp(g^P) \cap \{i,j,k\}$ equals $\{j\}$ as well. It follows that the number of columns of type $3$ whose size of intersection with $\{i,j,k\}$ equals one is at most $2(r-4)$. 

Thus, the number of type $3$ columns in $Q-e^P$ is at most $10(r-4) + 2(r-4) + 6 = 12r-42$ and \ref{sublemma_3} holds.

\begin{sublemma} \label{sublemma_4}
The number of columns of type $2$ in $Q-e^P$ is at most $12r-40$. 
\end{sublemma}

First, suppose there is a column $f^P$ of type $2$ in $Q-e^P$ that has both non-zero root entries of the same sign. Let the support of $f^P$ be $\{1,i,j\}$. For any other column $g^P$ of type $2$ in $Q-e^P$, the support of $g^P$ intersects $\{i,j\}$; otherwise, $f^P-g^P$ has the top entry zero and has three entries of the same sign, or $f^P+g^P$ has four entries of the same sign, contradicting \ref{argument}. 
Let $g_1^P, \ldots, g_t^P$ be all the columns of type $2$ in $Q-e^P$ whose support intersects $\{i,j\}$ at $\{i\}$. Since for a given support $s$, there are at most four columns of type $2$ in $Q-e^P$ with support $s$, $t$ is at most $4(r-3)$.  By symmetry, the number of columns of type $2$ in $Q-e^P$ whose support intersects $\{i,j\}$ at $\{j\}$ is at most $4(r-3)$. It follows that the number of type $2$ columns in $Q-e^P$ is at most $8(r-3) + 4$.  Since $r \geq 5$, we have $8r-20 \leq 12r-40$ in this case.

We may now assume that no type $2$ column in $Q-e^P$ has two root entries with the same sign. 
If, for every pair $f^P, g^P$ of type $2$ columns in $Q-e^P$, either $\supp(f^P) = \supp(g^P)$ or $\supp(f^P) \cap \supp(g^P) = \{1\}$, then the number of type $2$ columns is at most $4 \lfloor\frac{r-1}{2} \rfloor$ and the result follows. Therefore there exists a pair of columns $f^P$ and $g^P$ that have distinct supports $\{1,i,j\}$ and $\{1,j,k\}$. 
The $j^{\rm th}$ entries of $f^P$ and $g^P$ are the same, or else $f^P - g^P$ would have three same sign entries and top entry zero, a contradiction to \ref{argument}. Let $d^P$ be another type $2$ column. Note that the support of $d^P$ intersects $\{i,j,k\}$ else $f^P + g^P + d^P$ has four entries of the same sign and has the top entry zero, a contradiction to \ref{argument}. It follows that there are $3(r-4)$ choices for the support of $d^P$. Since for a given support $s$, we have at most four columns of type $2$ with support $s$, the number of type $2$ columns is at most $12(r-4) + 4(2) = 12r - 40$. Therefore \ref{sublemma_4} holds in this case as well. 
\bigskip

The following is a straightforward observation.

\begin{sublemma}
\label{sublemma_5}
The number of type $1$ columns in $Q-e^P$ is at most $2r-2$.
\end{sublemma}

\subsubsection*{The Type 4 case.}
Now suppose that there is a column $f^P$ of type $4$ in $Q-e^P$ with support $\{1,i,j,k,l\}$. We may assume that the $i^{\rm th}$ and $j^{\rm th}$ entries of $f^P$ are $1$, and the $k^{\rm th}$ and $l^{\rm th}$ entries are $-1$. 

Let $g^P$ be a column of type $3$ in $Q-e^P$. 
We show that the support of $g^P$ intersects both $\{i,j\}$ and $\{k,l\}$. Suppose that the $i^{\rm th}$ and $j^{\rm th}$ entries of $g^P$ are zero and let $m$ be any element of $\supp(g^P) - \supp(f^P)$. The $m^{\rm th}$ entry of $g^P$ is $1$; otherwise, $f^P-g^P$ has three ones and has top entry zero, a contradiction to \ref{argument}. It follows that the support of $g^P$ is $\{1,k,l,m\}$; else $f^P + g^P$ has at least four ones, contradicting \ref{argument}. Moreover, the $k^{\rm th}$ entry and $l^{\rm th}$ entries of $g^P$ must be $-1$; otherwise, $f^P-g^P$ has at least three ones and top entry zero, a contradiction. It now follows that $f^P+g^P$ has four ones, contradicting \ref{argument}. Thus, the support of $g^P$ intersects $\{i,j\}$ and by symmetry, $\{k,l\}$ as well. 
There are at most $24$ type $3$ columns $g^P$ such that $\supp(g^P) \subseteq \{1,i,j,k,l\}$.  (There are four choices for its supporting root elements.  The root signs cannot be all the same, so there are six ways to choose those signs.)

We next consider those columns $g^P$ such that $|\supp(g^P) \cap \{i,j,k,l\}| = 2$. The possible ways $\supp(g^P)$ intersects with $\{i,j,k,l\}$ are $\{i,k\}, \{i,l\}, \{j,k\},$ and $\{j,l\}$. 
Suppose that $\supp(g^P) = \{1,j,l,m\}$, so $ \supp(g^P) \cap \{i,j,k,l\} = \{j,l\}$. 
Observe that the $j^{\rm th}$ entry of $g^P$ is $1$ or the $l^{\rm th}$ entry of $g^P$ is $-1$. If not, then $f^P-g^P$ has the top entry zero and has three entries of the same sign, a contradiction to \ref{argument}. 
Furthermore, the $j^{\rm th}$ and $l^{\rm th}$ entries of $g^P$ cannot be equal.  If both equal $1$, the $m^{\rm th}$ entry of $g^P$ is $-1$.  If both equal $-1$, the $m^{\rm th}$ entry of $g^P$ is $1$.  In either case $f^P-g^P$ contradicts \ref{argument}.  Therefore, the $j^{\rm th}$ entry of $g^P$ is $1$ and the $l^{\rm th}$ entry of $g^P$ is $-1$. 
Since there are $r-5$ choices for $m$, the number of columns $g^P$ such that $\supp(g^P) \cap \{i,j,k,l\} = \{j,l\}$ is at most $2(r-5)$. This holds for each of the four possible intersections, so the columns $g^P$ such that $|\supp(g^P) \cap \{i,j,k,l\}| = 2$ number at most $8(r-5)$. Therefore, the number of columns of type $3$ is at most $8(r-5) + 24 = 8r - 16$. 

Next, let $g^P$ be a type $2$ column. The support of $g^P$ intersects $\{i,j,k,l\}$, or else $f^P-g^P$ contradicts \ref{argument}. Suppose that $\supp(g^P) = \{i,m\}$ where $m$ is not in $\{1,i,j,k,l\}$. The $i^{\rm th}$ entry of $g^P$ must be $1$ and the $m^{\rm th}$ entry of $g^P$ must be $-1$; otherwise, $f^P-g^P$ contradicts \ref{argument}.  It follows that $f^P+g^P$ has four $-1$'s, contrary to \ref{argument}. Therefore $\supp(g^P) \subseteq \{1,i,j,k,l\}$ so there are at most $24$ columns of type $2$.

If $g^P$ is a column of type $1$, it is straightforward to check that $\supp(g^P) \subseteq \{1,i,j,k,l\}$, so there are at most $8$ columns of type $1$. 

Summarizing the case in which there is a column of type $4$, it now follows from \ref{sublemma_1} and \ref{sublemma_2} that there are at most $ \lfloor\frac{r-1}{2} \rfloor + (8r-34) + (8r-16) + 24 + 8 = \lfloor\frac{33r-37}{2} \rfloor $ columns in $Q-e^P$ so $|E(M)| \leq \lfloor\frac{35r-35}{2} \rfloor$ and the result follows.

\subsubsection*{The Type 3 case.}
Now, suppose that there is no column of type $4$. If there is no column of type $3$, the result follows from \ref{sublemma_4} and \ref{sublemma_5}, so we may assume that there is a column of type $3$. Let $f^P$ be a column of type $3$ with support $\{1,i,j,k\}$. For $\{\alpha, \beta \} = \{1, -1\}$, assume that the $i^{\rm th}$ and $j^{\rm th}$ entries of $f^P$ are $\alpha$ and the $k^{\rm th}$ entry is $\beta$. 

Let $g^P$ be a column of type $2$. The support of $g^P$ intersects $\{i,j,k\}$; otherwise, $f^P-g^P$ would have five non-zero entries, at least three having the same sign, and top entry zero, in contradiction to \ref{argument}. Suppose that $\supp(g^P) = \{1,i,m\}$, where $m$ is not in $\{1,i,j,k\}$. There are at most four columns that have the support $\{1,i,m\}$. However, if the $i^{\rm th}$ entry of $g^P$ is $\beta$ and the $m^{\rm th}$ entry is $\alpha$, then $f^P-g^P$ contradicts \ref{argument}. It follows that there are at most $3(r-4)$ columns in $Q-e^P$ such that $\supp(g^P) \cap \{i,j,k\} = \{i\}$. The number of columns $g^P$ such that $\supp(g^P) \cap \{i,j,k\} = \{j\}$ is also at most $3(r-4)$. Now we consider those columns $g^P$ of type $2$ such that $\supp(g^P) = \{1,k,m\}$, where $m$ is not in $\{1,i,j,k\}$. It follows by \ref{argument} that the $k^{\rm th}$ entry of $g^P$ is $\beta$ and the $m^{\rm th}$ entry is $\alpha$, so there are at most $r-4$ such columns. The number of columns $g^P$ of type $2$ such that $\supp(g^P) \subseteq \{1,i,j,k\}$ is at most $12$, so the number of type $2$ columns in $Q-e^P$ is at most $7(r-4) + 12$. 

For the columns $g^P$ of type $1$ that have $\supp(g^P) \subseteq \{1,i,j,k\}$, there are at most $6$ such columns. If $\supp(g^P) = \{1,m\}$ and $m$ is not in $\{1,i,j,k\}$, then the $m^{\rm th}$ entry is $\alpha$; otherwise, $f^P-g^P$ contradicts \ref{argument}. Let $g^P_1$ and $g^P_2$ be two type $1$ columns whose support is not contained in $\{1,i,j,k\}$. Then $f^P + g^P_1 + g^P_2$ has four entries that are $\alpha$ and has top entry zero, a contradiction. Therefore, there are at most $7$ columns of type $1$ in $Q-e^P$. 

In summary, by \ref{sublemma_1} and \ref{sublemma_3} the number of columns in $Q-e^P$ is at most $\lfloor\frac{r-1}{2} \rfloor + (12r-42) + (7r-16) + 7 =  \lfloor\frac{39r-103}{2} \rfloor$, so $|E(M)| \leq \lfloor\frac{41r-101}{2} \rfloor$. 

\subsubsection*{Conclusion}
This completes the proof that $|E(M)| \leq \lfloor \frac{1}{2}\max(41r-101, 35r-35) \rfloor$ for any $r \geq 5$.  If $r \leq 4$ the bound $\frac{35r-35}{2}$ holds because it exceeds $\frac{3^r-1}{2}$, the maximum size of a ternary matroid of rank $r$.
\end{proof}

\newpage

\section{Two (or more) loose elements in $GF(q)$-matroids}

We partially characterize $GF(q)$-matroids that have two loose elements; then we infer Rajpal's Theorem \ref{4q_bound_intro}.

\begin{theorem}
\label{two_almost_loose}
Let $M$ be a simple $GF(q)$-matroid with no coloops that has two loose elements $e$ and $f$. Then $r(M) \leq 2q$, or $\{e,f\}$ is a cocircuit of $M$.
\end{theorem}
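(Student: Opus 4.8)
The plan is to work with a standard $GF(q)$-representation and exploit the looseness of $e$ and $f$ to bound the size of the support of every other column. Suppose $\{e,f\}$ is not a cocircuit; then $E(M)\setminus\{e,f\}$ is spanning, so there is a basis $B$ of $M$ avoiding both $e$ and $f$. Take the standard representation $P=[I_r\mid Q]$ with respect to $B$, where the columns of $Q$ are labelled by the nonbasis elements (including $e$ and $f$). As in the proof of Theorem~\ref{ternary_one_aloose}, looseness of an element $g$ with column $g^P$ says precisely that $g^P$ cannot be combined with a set of $\le r-1$ basis columns and a few other columns to produce the zero vector; concretely, if a vector $v$ is obtained from $g^P$ by adding scalar multiples of $k$ other columns and $v$ has $m$ zero entries, then $g$ lies in a circuit of size $\le (k+1)+(r-m)$, so looseness forces $m \le k+1$ (the bound $k$ when in addition an explicitly-forced coordinate such as the one supporting $e$ is zero), exactly the mechanism of Step~\ref{argument}.

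First I would analyze $e^P$ and $f^P$ themselves. Since $e$ is loose and $e^P$ is a single column ($k=0$), $e^P$ has at most one zero entry, so $e^P$ has support of size $\ge r-1$; likewise $f^P$. After row/column scaling assume the nonzero entries of $e^P$ are all $1$. Now the key claim is that if $r(M)$ is large then $e^P$ and $f^P$ must be ``essentially the same'' in a strong sense: by looking at $e^P - f^P$ (a combination of the single column $f^P$ added to $e^P$, i.e. $k=1$ relative to $e$) we learn $e^P-f^P$, after scaling $f^P$ suitably, has at most two nonzero entries; dually, scaling differently, only a bounded number of the $r$ coordinates can be ``exceptional.'' So on all but $O(1)$ coordinates, $e^P$ and $f^P$ agree up to a scalar. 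Then I would run the Step~\ref{argument}-style argument on a third column $g^P$: using $e^P$ it must have bounded support on one side, using $f^P$ it must have bounded support in the complementary region, and combining the two constraints pins $g^P$ to have support inside a fixed bounded set of coordinates. Counting the vectors supported on a fixed set of $c$ coordinates over $GF(q)$ gives $|E(M)|\le q^{c}+\text{const}$, but I actually want a bound on $r$, not $|E(M)|$; so instead I would argue that if $r>2q$ then the "agreement" structure of $e^P,f^P$ is impossible: a vector all of whose $r-1$ nonzero entries are $1$ and a second vector agreeing with it up to scalar on $\ge r-3$ coordinates, with $r-1 > 2q-1$, forces (by a counting/pigeonhole argument on the $q-1$ possible nonzero scalars) enough repeated structure that $e$ or $f$ lands in a short circuit — contradiction. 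The cleanest route is probably: among the $\ge r-1$ coordinates where $e^P$ is $1$, the column $f^P$ takes nonzero values; if $f^P$ is not a scalar multiple of $e^P$ on a large set, two coordinates $i,j$ have $f^P_i\ne f^P_j$ (both nonzero), and then a suitable scalar combination $e^P - \lambda f^P$ kills $\ge 2$ entries while $f^P$ itself being long kills nothing extra — iterating this "one can always zero out two more coordinates" shows that if $r-1 \ge 2(q-1)+\text{slack}$ we can drive the number of zeros above the looseness threshold.

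I expect the main obstacle to be making the last counting argument tight enough to yield exactly $r\le 2q$ rather than some larger linear bound: one must carefully track how many coordinates can be ``spent'' zeroing out entries of $e^P$ using scalar multiples of $f^P$, and show that the worst case (where $f^P$ takes each of the $q-1$ nonzero scalar values on a roughly equal share of coordinates, together with the at most two extra support coordinates outside $\supp(e^P)$ and the at most one zero of $e^P$) still forces a circuit of size $<r$ once $r>2q$. A secondary subtlety is the degenerate small cases and the possibility that $e^P$ or $f^P$ has full support (no zero at all), which must be handled by the same argument with one fewer slack coordinate. If the direct combinatorial bound is awkward, an alternative is to invoke Theorem~\ref{ternary_one_aloose}'s philosophy abstractly: a loose element $e$ forces every other column to have support meeting a fixed $(r-1)$-set ``densely,'' and two loose elements intersect these constraints to collapse the rank; but I would prefer the explicit pigeonhole version since it gives the sharp constant $2q$ and makes the deduction of Theorem~\ref{4q_bound_intro} immediate (apply the result to any two elements of a paving matroid that is not a circuit, noting no $2$-element cocircuits occur in a simple matroid of rank $>2$).
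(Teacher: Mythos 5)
Your setup coincides exactly with the paper's: since $\{e,f\}$ is not a cocircuit there is a basis $B$ disjoint from $\{e,f\}$; in $P=[I_r\mid Q]$ the columns $e^P$ and $f^P$ each have at most one zero; and one scales so that every nonzero entry of $e^P$ is $1$. But the step you flag as the ``main obstacle'' is precisely where your sketch goes wrong, and the fix is both necessary and much simpler than the iteration you describe. The pigeonhole must be run on \emph{equal} values of $f^P$, not unequal ones: writing $f^{P'}$ for $f^P$ with its first entry deleted (so that $e^P$ equals $1$ in every remaining coordinate), if $f^{P'}$ takes the same nonzero value $\lambda$ at \emph{three} positions $i,j,k$, then $f^P-\lambda e^P$ vanishes at $i,j,k$ and hence lies in the span of $B-\{e_i,e_j,e_k\}$; since $e^P$ and $f^P$ are each nonzero at $i,j,k$, every circuit inside $\{e,f\}\cup(B-\{e_i,e_j,e_k\})$ contains $e$, and such a circuit has size at most $2+(r-3)=r-1$, contradicting looseness. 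Your version---find $i,j$ with $f^P_i\ne f^P_j$ and ``kill $\ge 2$ entries,'' then iterate---fails on both counts: a combination of $e^P$ and $f^P$ with only two zeros yields a circuit of size at most $r$, which a loose element is permitted to lie in, and there is nothing to iterate, since each scalar $\lambda$ produces a single vector $f^P-\lambda e^P$ and the zero sets for different $\lambda$ cannot be accumulated into one short circuit. (Relatedly, your parenthetical that $e^P-f^P$ ``has at most two nonzero entries'' should read ``at most two \emph{zero} entries.'')

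Once the claim is stated with the correct threshold, the count closes with no slack to track and none of the extra machinery you propose (the third column $g^P$, the ``agreement up to scalar'' of $e^P$ and $f^P$, or any bound on $|E(M)|$): each of the $q-1$ nonzero field elements occurs at most twice among the $r-1$ entries of $f^{P'}$, and at most one entry of $f^{P'}$ is zero, so $r-1\le 2(q-1)+1$, i.e.\ $r\le 2q$. This is exactly the paper's two-line finish, so the gap in your proposal is not one of approach but of execution: the decisive counting step is left as a hoped-for computation, and the sketch of how to carry it out points in the wrong direction.
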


\begin{proof}
We may assume that $\{e,f\}$ is not a cocircuit of $M$. Therefore, there is a basis $B$ that is disjoint from $\{e,f\}$. In the standard $GF(q)$-representation $P = [I_r \mid Q]$, the columns $e^P$ and $f^P$ each have at most one zero. By row and column scaling, we may assume that $e^P$ has all ones except that the first entry may be zero. 

Let $f^{P'}$ denote the column obtained from $f^P$ by removing the first entry. Suppose $f^{P'}$ has at least three non-zero entries that are equal, say at entries $i,j,$ and $k$. Then $ (\{e,f\} \cup B) - \{e_i,e_j,e_k\}$ contains a circuit of $M$ that has size less than $r(M)$ and includes $e$, a contradiction. Since the field $GF(q)$ has $q-1$ non-zero elements, it follows that $f^{P'}$ has at most $2(q-1)$ non-zeroes. Since $f^{P'}$ has at most one zero, $f^{P'}$ has at most $2q-1$ entries. Therefore, $f^P$ has at most $2q$ entries, and thus, the rank of $M$ is at most $2q$.
\end{proof}

\begin{remark}
\label{remark_1}
If $M$ has a spanning circuit containing exactly one of $\{e,f\}$, then the rank bound in Theorem \ref{two_almost_loose} can be improved to be $2q-1$.
\end{remark}

\begin{remark}
\label{remark_ef}
In the case where $\{e,f\}$ is a cocircuit in $M$, so $e$ and $f$ are loose elements but Theorem \ref{two_almost_loose} does not apply, then $M - \{e,f\}$ can be any $GF(q)$-matroid of rank $r(M)-1$.
\end{remark}

\begin{corollary}
\label{two_almost_loose_corollary}
Let $M$ be a simple and cosimple $GF(q)$-matroid that has two loose elements $e$ and $f$. Then $r(M) \leq 2q$. 
\end{corollary}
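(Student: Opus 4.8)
The plan is to deduce Corollary~\ref{two_almost_loose_corollary} from Theorem~\ref{two_almost_loose} by ruling out the alternative conclusion that $\{e,f\}$ is a cocircuit of $M$. Since $M$ is cosimple, $M^*$ is simple, so $M$ has no cocircuit of size~$1$ (no coloops) and no cocircuit of size~$2$ (no series pairs). In particular $\{e,f\}$ cannot be a cocircuit. Hence Theorem~\ref{two_almost_loose} forces $r(M) \leq 2q$.

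Written out, the argument is short. First I would observe that cosimplicity of $M$ is equivalent to simplicity of $M^*$, and that a cocircuit of $M$ of size $k$ is precisely a circuit of $M^*$ of size $k$. A simple matroid has no loops and no parallel pairs, i.e. no circuits of size $1$ or $2$. Therefore $M^*$ has no circuits of size $1$ or $2$, so $M$ has no cocircuits of size $1$ or $2$. In particular $M$ has no coloops (which also matches the hypothesis already stated) and $\{e,f\}$ is not a cocircuit of $M$.

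Second, I would apply Theorem~\ref{two_almost_loose}: since $M$ is a simple $GF(q)$-matroid with no coloops having two loose elements $e$ and $f$, and we have just shown $\{e,f\}$ is not a cocircuit, the theorem yields $r(M) \leq 2q$. That completes the proof.

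There is essentially no obstacle here; the only point requiring care is the translation between cosimplicity and the nonexistence of small cocircuits, and making sure the ``no coloops'' hypothesis of Theorem~\ref{two_almost_loose} is met — but that is immediate from cosimplicity. One might optionally remark, using Remark~\ref{remark_1}, that if in addition $M$ has a spanning circuit through exactly one of $e,f$ then the bound sharpens to $2q-1$, but that is not needed for the stated corollary.
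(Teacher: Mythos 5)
Your proof is correct and is exactly the argument the paper intends (the paper omits the proof as immediate): cosimplicity rules out cocircuits of size one or two, so the second alternative of Theorem~\ref{two_almost_loose} cannot occur and the rank bound follows.
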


The following result of Rajpal \cite{rajpal} that slightly improves Theorem \ref{4q_bound_intro} follows immediately from Theorem \ref{two_almost_loose}. 

\begin{corollary}
\label{paving}
Let $M$ be a simple $GF(q)$-paving matroid with no coloops that is not a circuit. Then the rank of $M$ is at most $2q$. Moreover, if $r(M) = 2q$, then $M$ has no spanning circuits.
\end{corollary}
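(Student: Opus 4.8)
The plan is to derive everything from Theorem \ref{two_almost_loose} together with Remark \ref{remark_1}, observing that a paving matroid has loose elements in abundance and cannot have a two-element cocircuit unless it is degenerate. First I would recall that in a paving matroid $M$ every element is loose. Now assume $M$ is simple, has no coloops, and is not a circuit, and suppose for contradiction that $r(M) > 2q$ (so in particular $r(M) \geq 3$). Pick any two distinct elements $e, f \in E(M)$; both are loose. Since Theorem \ref{two_almost_loose} forces $r(M) \leq 2q$ unless $\{e,f\}$ is a cocircuit of $M$, we conclude that \emph{every} pair of elements of $M$ is a cocircuit. But if $\{e,f\}$ and $\{e,g\}$ are both cocircuits with $f \neq g$, then their symmetric difference $\{f,g\}$ contains a cocircuit, forcing $\{f,g\}$ to be a cocircuit too, and more to the point $M$ has a cocircuit of size $2$ meeting $e$; iterating, $\{e\}$ would have to be a cocircuit, i.e.\ $e$ is a coloop, contradicting our hypothesis — unless $|E(M)| \leq 2$. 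If $|E(M)| \le 2$ then, being simple, $M$ is $U_{1,1}$, $U_{1,2}$, or $U_{2,2}$, all of which are either circuits or have coloops, a contradiction. Hence $r(M) \leq 2q$.

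For the second statement, suppose $r(M) = 2q$ and, for contradiction, that $M$ has a spanning circuit $C$. Choose $e \in C$ and $f \in E(M) - C$; such an $f$ exists because $M$ is not a circuit, and $M - e$ has no coloops in the relevant sense (indeed, since $M$ has no coloops and $C$ is a circuit, $E(M) \neq C$). Both $e$ and $f$ are loose, and $C$ is a spanning circuit containing exactly one of $\{e,f\}$, namely $e$. By Remark \ref{remark_1}, the rank bound improves to $r(M) \leq 2q - 1$, contradicting $r(M) = 2q$. Therefore $M$ has no spanning circuits.

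I would double-check one point of care: in applying Theorem \ref{two_almost_loose} and Remark \ref{remark_1} we need $M$ to have \emph{no coloops}, which is part of the hypothesis, and we need two genuinely distinct loose elements, which is immediate since every element is loose and $|E(M)| \geq 3$ once we know $M$ is not a circuit and has no coloops (a simple matroid with $\le 2$ elements and no coloops does not exist beyond $U_{2,2}$, which has coloops). The only real subtlety — and the step I expect to need the most care — is the combinatorial observation that "every pair of elements is a cocircuit'' collapses to a coloop or a tiny matroid; this is where one must argue cleanly using cocircuit elimination (or, dually, that the circuits of $M^*$ all have size $2$, so $M^*$ is a disjoint union of parallel pairs plus loops, whence $M$ is a direct sum of cocircuits and coloops, and simplicity plus no coloops forces $M$ itself to be a single cocircuit of size $\le 2$). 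Everything else is a direct citation of the two preceding results.
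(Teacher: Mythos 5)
Your overall strategy is the paper's: apply Theorem \ref{two_almost_loose} to pairs of loose elements to conclude that if $r(M) > 2q$ then every pair of elements of $M$ is a cocircuit, and use Remark \ref{remark_1} for the spanning-circuit refinement. The second half of your argument is correct and essentially identical to the paper's. But the step where you dispose of the case ``every pair of elements is a cocircuit'' contains a genuine error. Cocircuit elimination applied to $\{e,f\}$ and $\{e,g\}$ yields a cocircuit contained in $\{f,g\}$ --- a set that does \emph{not} contain $e$ --- so no iteration produces the singleton cocircuit $\{e\}$. Indeed the conclusion you are aiming for is false: in the circuit $U_{r-1,r}$ (which is simple for $r\ge 3$, has no coloops, and is a $GF(q)$-representable paving matroid for every $r$) every $2$-subset is a cocircuit. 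The same problem afflicts your dual reformulation: if every pair of elements of $M$ is a cocircuit, then every pair of elements of $M^*$ is a circuit, so \emph{all} elements of $M^*$ lie in one parallel class (not a disjoint union of parallel pairs), giving $M^*\cong U_{1,n}$ and hence $M\cong U_{n-1,n}$ --- a circuit of arbitrary size, not ``a single cocircuit of size $\le 2$.''

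The fix is exactly the hypothesis you state but never invoke in the first half: $M$ is not a circuit. The correct deduction, and the one the paper makes, is that ``every pair of elements is a cocircuit'' forces $M$ to \emph{be} a circuit, contradicting that hypothesis; there is no contradiction with simplicity or with the absence of coloops. With that one sentence repaired, the rest of your proof goes through: in particular your observation that $E(M)\ne C$ for a spanning circuit $C$ (so a loose element $f\notin C$ exists) and the appeal to Remark \ref{remark_1} match the paper's argument.
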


\begin{proof}
Every element of $M$ is loose. If $r(M)>2q$, it follows from Theorem \ref{two_almost_loose} that every pair of elements of $M$ is in a cocircuit of size two. It follows that $M$ is a circuit. Otherwise, $r(M)$ is at most $2q$. If $M$ has a spanning circuit, then by Remark \ref{remark_1}, the rank of $M$ does not exceed $2q-1$. 
\end{proof}

A paving matroid $M$ is \textbf{sparse paving} if its dual $M^*$ is paving. 
A matroid $M$ is sparse paving if and only if each non-spanning circuit of $M$ is a hyperplane. Theorem \ref{4q_bound_intro} due to Rajpal \cite{rajpal} follows immediately from his \cite[Proposition 1$'$]{rajpal}. We provide an alternate proof of the latter. 

\begin{proposition}[{\cite[Proposition 1$'$]{rajpal}}]
\label{sparse_paving}
Let $M$ be a simple $GF(q)$-paving matroid with no coloops. Then $r(M) \leq q$, or $M$ is sparse paving.
\end{proposition}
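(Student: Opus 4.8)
The plan is to argue contrapositively: assuming $M$ is a simple $GF(q)$-paving matroid with no coloops and $r(M) > q$, I would show that $M^*$ is also paving, i.e.\ every circuit of $M^*$ (equivalently, every cocircuit of $M$) has size at least $r(M^*) = |E(M)| - r(M) + 1$. Equivalently, using the hyperplane characterization quoted just before the statement, it suffices to show that every non-spanning circuit $C$ of $M$ is a hyperplane, i.e.\ that $r(C) = r(M) - 1$. Since $M$ is paving, $|C| \geq r(M)$, and because $M$ is simple and loopless, $C$ is a circuit, so $r(C) = |C| - 1 \geq r(M) - 1$; since $C$ is non-spanning, $r(C) \leq r(M)-1$. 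Hence $r(C) = r(M)-1$ automatically, which would make $M$ sparse paving \emph{unconditionally} — so this naive argument must be missing something, and the real content has to come from Theorem \ref{4q_bound_intro} / Corollary \ref{paving} bounding $r(M)$ and $|E(M)|$, not from a formal manipulation of ranks.

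So instead I would run the argument through the size bound. By Corollary \ref{paving}, if $r(M) > q$ then either $M$ is a circuit — in which case it is trivially sparse paving (its only non-spanning circuits, if any, are the single spanning circuit, vacuously hyperplanes) — or $r(M) \leq 2q$. In the latter case the substantive claim is that when $q < r(M) \leq 2q$ the matroid is forced to be sparse paving. Here I would invoke Theorem \ref{two_almost_loose} (the "two loose elements" theorem) applied to an arbitrary pair $e, f$ of elements: since every element of a paving matroid is loose, and $r(M) \leq 2q$ does not by itself force $\{e,f\}$ to be a cocircuit, I instead need the \emph{finer} structural information. The key step is: take any non-spanning circuit $C$ and any element $e \notin C$; pick a second element $f$; if $C$ fails to be a hyperplane then $r(C \cup e) = r(C) + 1 = r(M)$, which gives a spanning circuit inside $C \cup e$ containing $e$ but avoiding $f$ for suitable $f$, so by Remark \ref{remark_1} we would need $r(M) \leq 2q - 1$, and iterating this observation across enough elements squeezes $r(M)$ below $q$, contradicting $r(M) > q$.

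The cleanest route, and the one I would actually write, is therefore: assume $r(M) > q$ and $M$ is not sparse paving, so there is a non-spanning circuit $C$ that is not a hyperplane; then $r(M/ \text{cl}(C)) \geq 2$, equivalently $E(M) \setminus \text{cl}(C)$ spans a rank-$\geq 2$ contraction, and in particular there are two elements $e, f \notin \text{cl}(C)$ that are not parallel in $M/C$. One checks that $C \cup \{e\}$ contains a spanning circuit of $M| (C \cup e)$ — wait, it has rank $r(C)+1 \leq r(M)$ — so more carefully, since $C$ is not a hyperplane, extend $C$ by independent elements to a basis; there is a spanning circuit through $C$ together with one new element, which contains exactly one of $e,f$ once they are chosen outside the relevant flat. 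Remark \ref{remark_1} then forces $r(M) \leq 2q-1$; repeating with a different non-spanning circuit (there must be one of every corank, by a counting/greedy argument since $M$ is paving but not sparse paving forces many short cocircuits) drives the rank down by one each time until it drops to $q$, the contradiction.

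The main obstacle I anticipate is the bookkeeping in the iteration: making precise the claim that "not sparse paving" lets one repeatedly find spanning circuits meeting a chosen pair $\{e,f\}$ in exactly one element, and that each application of Remark \ref{remark_1} genuinely lowers the available rank rather than merely re-deriving $r(M)\le 2q-1$ once. An alternative that sidesteps the iteration is to use the explicit characterizations behind Theorem \ref{4q_bound_intro}: a simple $GF(q)$-paving matroid with $q < r(M) \le 2q$ that has a non-hyperplane non-spanning circuit would have to contain a $U_{2, q+2}$-minor or violate the $|E(M)| \le 4q$ bound by a direct count of the elements forced outside $\text{cl}(C)$ — so the fallback plan is to derive a contradiction purely from $|E(M)| \le 4q$ together with the fact that a non-hyperplane circuit $C$ of size $\ge r(M) \ge q+1$ plus its complement, which must itself span a rank-$\ge 2$ restriction with at least $r(M) - r(C) + 1 \ge 2$ independent-ish elements, overruns $4q$ once $r(M)$ is close to $2q$. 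That counting version is the one I'd expect to go through with the least pain.
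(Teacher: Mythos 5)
There is a genuine gap, and it starts with a misreading of what ``sparse paving'' requires. You conflate ``$C$ has rank $r(M)-1$'' with ``$C$ is a hyperplane.'' A hyperplane is a \emph{flat}; every non-spanning circuit of a paving matroid automatically has rank $r(M)-1$ (as your own computation shows), but it fails to be a hyperplane precisely when it is not closed, i.e.\ when there is an element $f \in \mathrm{cl}(C) - C$. That element $f$ is the entire engine of the proof, and your proposal never produces or uses it. The paper's argument is short and direct: if $C$ is a non-spanning circuit that is not closed, pick $f \in \mathrm{cl}(C)-C$; then $C \cup f$ contains a circuit through $f$, which by the paving hypothesis must be $(C-x)\cup f$ for a single element $x$ of $C$. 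Extending $C - x$ to a basis $B$, the columns $x^P$ and $f^P$ in $[I_r \mid Q]$ are both zero in the coordinate of $B - C$ and non-zero in the other $r-1$ coordinates; after normalizing $x^P$ to have all its non-zero entries equal to $1$, any two equal entries of $f^P$ among those coordinates would produce (via $x^P - f^P$ and the identity columns) a circuit of size less than $r$, violating the paving condition. So the $r-1$ non-zero entries of $f^P$ are pairwise distinct elements of $GF(q)\setminus\{0\}$, forcing $r-1 \leq q-1$, i.e.\ $r(M) \leq q$.

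Your substitute routes do not close this gap. The iteration through Remark \ref{remark_1} only ever yields $r(M) \leq 2q-1$; you give no mechanism by which a second application lowers the bound further, and you acknowledge as much. The fallback via $|E(M)| \leq 4q$ is circular in this paper's development: that bound (Theorem \ref{4q_bound_intro}) is \emph{deduced from} Proposition \ref{sparse_paving}, not available before it. Neither the two-loose-elements theorem nor the $2q$ rank bound is needed here at all; the proposition is proved from scratch by the two-circuits-sharing-$(r-1)$-elements computation above.
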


\begin{proof}
If all the non-spanning circuits of $M$ are hyperplanes, then $M$ is sparse paving; so assume otherwise. Let $C$ be a non-spanning circuit of $M$ that is not a hyperplane. Then there is an element $f$ not in $C$ such that $(C-x) \cup f$ is a circuit for some $x$ in $C$. Pick a basis $B = \{e_1, e_2, \ldots, e_r\}$ that contains $C-x$, say $C-x = \{e_2, e_3 \ldots, e_{r}\}$. Then in the standard $GF(q)$-representation $P = [I_r \mid Q]$, both columns $x^P$ and $f^P$, labeled by elements $x$ and $f$, respectively, have their first entry equal to zero, while all other entries non-zero. By row and column scaling, we may assume that $x^P$ has all ones except at the first entry. Let $f^{P'}$  denote the column obtained from $f^P$ by removing the first entry of $f^P$. Note that all entries of $f^{P'}$ are non-zero. Suppose $f^{P'}$ has at least two entries that are equal, say, entries $i$ and $j$. Then $ (\{x,f\} \cup B) - \{e_1,e_i,e_j\}$ is a circuit of size less than $r(M)$, contrary to assumption. It follows that all entries of $f^{P'}$ are distinct. Since $GF(q)$ has $q-1$ non-zero entries, it follows that $f^P$ has at most $q$ entries. Therefore $r(M) \leq q$. 
\end{proof}

\begin{proof}[Proof of Theorem \ref{4q_bound_intro}]
 By Proposition \ref{sparse_paving}, it follows that $M$ is sparse paving. Since $M$ is not a circuit, by Corollary \ref{paving} the rank of $M$ is at most $2q$. Observe that $M^*$ is not a circuit as $M$ is simple so the rank of $M^*$ does not exceed $2q$. Therefore $|E(M)|$ is at most $4q$. 
\end{proof}

Rajpal \cite{rajpal} introduced paving codes, a generalization of MDS codes, and conjectured that no $(4q,2q)$ paving code exists over $GF(q)$ for $q \geq 4$. Since the matroid represented by the generator matrix of such a code is a $GF(q)$-paving matroid with rank $2q$ and $4q$ elements, the conjecture can be equivalently stated as follows: for $q \geq 4$, there is no $GF(q)$-paving matroid with rank $2q$ and $4q$ elements. 

For $q=2$, the unique binary paving matroid with rank four and eight elements is the binary affine geometry of rank four \cite{acketa}, and for $q=3$, the unique matroid obtained from the Steiner system $S(5,6,12)$ is the ternary paving matroid with rank six and twelve elements \cite{ternary_pav}.

In \cite[Corollary 2.3.1]{thesis}, Rajpal proved the conjecture by showing that a paving matroid with rank $2q$ and $4q$ elements is not representable over $GF(q)$ when $q \geq 4$. It follows that the only matroids attaining the bounds in Theorem \ref{4q_bound_intro} are $AG(3,2)$ for $q=2$, and the matroid obtained from the Steiner system $S(5,6,12)$ for $q=3$.

\end{document}